\def\M{{\mathcal{M}}}
\def\Z{{\mathbb Z}}
\def\C{{\mathbb C}}
\def\mcO{{\mathcal O}}
\def\bnull{{\vec 0}}
\newcommand{\E}{\overline{\mathcal E}}
\def\ow{{\vec w}}
\def\tT{{\widetilde T}}
\def\P{{\mathbb P}}
\newtheorem{proposition}{Proposition}[section]
\newtheorem{theorem}[proposition]{Theorem}
\newtheorem{lemma}[proposition]{Lemma}
\newtheorem{conjecture}[proposition]{Conjecture}
\newtheorem{remark}[proposition]{Remark}
\title[Homogeneous components in the moduli space of sheaves]{Homogeneous components in the moduli space of sheaves and Virasoro characters}
\subjclass[2010]{14C05, 05A17}
\keywords{Moduli space of sheaves, Virasoro character, quiver variety}
\author{A. Buryak} 
\thanks{A. B. is the corresponding author}
\address{A.~Buryak:\newline
Department of Mathematics,
University of Amsterdam, \newline
P.~O.~Box 94248, 1090 GE Amsterdam, 
The Netherlands\newline 
\indent and\newline
Department of Mathematics, Moscow State University,\newline
Leninskie gory, 119992 GSP-2 Moscow, Russia} 
\email{a.y.buryak@uva.nl, buryaksh@mail.ru}
\author{B. L. Feigin}
\address{B.~L.~Feigin:\newline
Landau Institute for Theoretical Physics, Russia, Chernogolovka, 142432, prosp. Akademika Semenova, 1a,\newline
Higher School of Economics, Russia, Moscow, 101000, Myasnitskaya ul., 20 and \newline
Independent University of Moscow, Russia, Moscow, 119002, Bol'shoi Vlas'evski per., 11}
\email{borfeigin@gmail.com}
\begin{document}

\begin{abstract}
The moduli space $\M(r,n)$ of framed torsion free sheaves on the projective plane with rank $r$ and second Chern class equal to $n$ has the natural action of the $(r+2)$-dimensional torus. In this paper, we look at the fixed point set of different one-dimensional subtori in this torus. We prove that in the homogeneous case the generating series of the numbers of the irreducible components has a beautiful decomposition into an infinite product. In the case of odd $r$ these infinite products coincide with certain Virasoro characters. We also propose a conjecture in a general quasihomogeneous case.
\end{abstract}

\maketitle

\section{Introduction}

Let $\M(r,n)$ be the moduli space of framed torsion free sheaves on $\P^2$ with rank $r$ and second Chern class $c_2$ equal to $n$. It is a smooth irreducible quasi-projective variety of dimension $2rn$. In the case $r=1$ it is isomorphic to the Hilbert scheme of $n$ points on the plane. The moduli space $\M(r,n)$ has a simple quiver description and we recall it in Section \ref{subsection:quiver description}. In principle, one can use this description as a definition of $\M(r,n)$. We refer the reader to \cite{Nakajima} and \cite{Nakajima2} for a more detailed discussion of the moduli space $\M(r,n)$.   

There is a natural action of the $(r+2)$-dimensional torus $T=(\C^*)^{r+2}$ on $\M(r,n)$. It is induced by the $(\C^*)^2$-action on $\P^2$ and by the action of $(\C^*)^r$ on the framing. Consider a vector 
$$
\ow=(w_1,w_2,\ldots,w_r)\in\Z^r
$$
and integers $\alpha,\beta\ge 1$, such that $gcd(\alpha,\beta)=1$. Let $T_{\alpha,\beta}^{\ow}$ be the one-dimensional subtorus of $T$ defined by 
$$
T_{\alpha,\beta}^{\ow}=\{(t^{\alpha},t^{\beta},t^{w_1},t^{w_2},\ldots,t^{w_r})\in T|t\in\C^*\}.
$$
For $0\le m\le r$, let $\ow(m)$ be the vector $(\underbrace{1,\ldots,1}_{\text{$m$ times}},0,\ldots,0)\in\Z^r$. We denote by $h_0(X)$ the number of connected components of a manifold~$X$. We will use the classical $q$-series notations:
\begin{align*}
&(a)_n=(a;q)_n=(1-a)(1-aq)\ldots(1-aq^{n-1}),\\
&(a_1,a_2,\ldots,a_k;q)_{\infty}=(a_1;q)_{\infty}(a_2;q)_{\infty}\ldots(a_k;q)_{\infty}.
\end{align*}
Now we can state our main result.
\begin{theorem}\label{main theorem}
For any $0\le m\le r$ we have
\begin{gather}\label{equation:main theorem}
\sum_{n\ge 0}h_0\left(\M(r,n)^{T_{1,1}^{\ow(m)}}\right)q^n=\frac{(-q)_\infty}{(q)_\infty}(q^{m+1},q^{r-m+1},q^{r+2};q^{r+2})_\infty.
\end{gather}
\end{theorem}

In the case of odd $r$ the right-hand side of \eqref{equation:main theorem} up to the factor $(-q)_{\infty}$ coincides with a certain Virasoro character. We discuss it in Section~\ref{virasoro}. In Section~\ref{conjecture} we formulate a conjecture in the case of arbitrary $\alpha,\beta$. We also give a conjectural formula for the two-variable generating function of the Betti numbers of $\M(2,n)^{T_{1,1}^{\ow(m)}}$ for $m=0,1$.

A connection between the moduli space $\M(r,n)$ and the Virasoro characters (or more generally $W_n$-characters) was also found in \cite{FFJMM}. It appears in a different context and we don't know how to relate it to our work. However in Section \ref{quantum continious} we review briefly the paper \cite{FFJMM}, because we use the characters defined there in our Conjecture~\ref{main conjecture}.   

Our proof of Theorem~\ref{main theorem} is combinatorial but we can propose another way to prove it using the representation theory of the toroidal Yangian. These ideas are under development and we briefly discuss them in Section~\ref{Yangian}. We are going to write the details in the forthcoming paper.

This work is a continuation of \cite{Buryak1} and \cite{Buryak2}. In \cite{Buryak1} the first author studied cohomology groups of $\M(1,n)^{T_{\alpha,\beta}}$. In \cite{Buryak2} the first author computed Betti numbers of $\M(r,n)^{(\C^*)^2}$ and showed that they coincide with certain coefficients in a generalization of the MacMahon's formula.

\subsection{Moduli space of sheaves on $\P^2$} 

The moduli space $\M(r,n)$ is defined by
\begin{gather*}
\M(r,n)=\left.\left\{(E,\Phi)\left|
\begin{smallmatrix}
\text{$E$: a torsion free sheaf on $\P^2$}\\
rank(E)=r,\,c_2(E)=n\\
\text{$\Phi\colon E|_{l_{\infty}}\xrightarrow{\sim}\mcO^{\oplus r}_{l_{\infty}}$: framing at infinity}
\end{smallmatrix}\right.\right\}\right/{\text{isomorphism}},
\end{gather*}
where $l_{\infty}=\{[0:z_1:z_2]\in\P^2\}\subset\P^2$ is the line at infinity. 

Let $\tT$ be the maximal torus of $GL_r(\C)$ consisting of diagonal matrices and let $T=(\C^*)^2\times \tT$. The action of $T$ on $\M(r,n)$ is defined as follows. For $(t_1,t_2)\in(\C^*)^2$ let $F_{t_1,t_2}$ be the automorphism of $\P^2$ defined by
$$
F_{t_1,t_2}([z_0:z_1:z_2])=[z_0:t_1z_1:t_2z_2].
$$
For $diag(e_1,\ldots,e_r)\in \tT$ let $G_{e_1,\ldots,e_r}$ denote the isomorphism of $\mcO^{\oplus r}_{l_{\infty}}$ given by
$$
\mcO^{\oplus r}_{l_\infty}\ni (s_1,\ldots,s_r)\mapsto (e_1s_1,\ldots,e_rs_r).
$$
Then for $(E,\Phi)\in\M(r,n)$ we define
$$
(t_1,t_2,e_1,\ldots,e_r)\cdot (E,\Phi)=((F_{t_1,t_2}^{-1})^*E,\Phi'),
$$
where $\Phi'$ is the composition of the homomorphisms
$$
(F_{t_1,t_2}^{-1})^*E|_{l_{\infty}}\xrightarrow{(F_{t_1,t_2}^{-1})^*\Phi}(F_{t_1,t_2}^{-1})^*\mcO^{\oplus r}_{l_\infty}=\mcO^{\oplus r}_{l_\infty}\xrightarrow{G_{e_1,\ldots,e_r}}\mcO^{\oplus r}_{l_\infty}.
$$

\subsection{Virasoro characters}\label{virasoro}

We recall several results from the representation theory of the Virasoro algebra. There are modules $M(p,p')_2$ that are called the Virasoro minimal models and labelled by coprime integers $p$ and $p'$ for which $1<p<p'$. They contain irreducible modules labelled by $r$ and $s$ with $1\le r<p$ and $1\le s<p'$. In \cite{FF,R}, the characters of these modules were computed to be $\widehat{\overline\chi}^{p,p'}_{r,s}=q^{\Delta^{p,p'}_{r,s}}\overline\chi^{p,p'}_{r,s}$, where $\overline\chi^{p,p'}_{r,s}$ is called the normalized character and is given by:
$$
\overline\chi^{p,p'}_{r,s}=\frac{1}{(q)_\infty}\sum_{\lambda=-\infty}^{\infty}\left(q^{\lambda^2pp'+\lambda(p'r-ps)}-q^{(\lambda p+r)(\lambda p'+s)}\right),
$$
and the number $\Delta^{p,p'}_{r,s}$ is called the conformal dimension and is given by: 
$$
\Delta^{p,p'}_{r,s}=\frac{(p'r-ps)^2-(p'-p)^2}{4pp'}.
$$

Now let us return to Theorem~\ref{main theorem}. The right-hand side of~\eqref{equation:main theorem} is known to be equal to $(-q)_\infty\overline\chi^{2,r+2}_{1,m+1}$ when $r$ is odd (see e.g.\cite{Welsh}). Thus we have the following equation
$$
\sum_{n\ge 0}h_0\left(\M(2k+1,n)^{T_{1,1}^{\ow(m)}}\right)q^n=(-q)_{\infty}\overline\chi^{2,2k+3}_{1,m+1}.
$$

\subsection{Quantum continuous $\mathfrak{gl}_{\infty}$}\label{quantum continious}

In \cite{FFJMM} the authors study representations of the associative algebra which they denote by $\mathcal E$ and call quantum continuous $\mathfrak{gl}_{\infty}$. This algebra depends on parameters $q_1$ and $q_2$. They construct representations $\mathcal F_1(u_1)\otimes\ldots\otimes\mathcal F_s(u_s)$ that depend on parameters $q_1,q_2,u_1,\ldots,u_s$. In \cite{FFJMM} these representations are constructed purely algebraically but they have the following geometrical meaning. The space $\mathcal F_1(u_1)\otimes\ldots\otimes\mathcal F_s(u_s)$ can be identified with the equivariant $K$-theory of $\coprod_{n\ge 0}\M(s,n)$ and the algebra $\mathcal E$ acts there through a slight generalization of the correspodences from~\cite{FT}.  

The authors of \cite{FFJMM} impose the following conditions on the parameters $q_1,q_2,u_1,\ldots,u_s$:
\begin{gather*}
u_i=u_{i+1}q_1^{a_i+1}q_2^{b_i+1},i=1,\ldots,s-1,\qquad q_1^{p}q_2^{p'}=1,
\end{gather*}
where $\vec a=(a_1,\ldots,a_{s-1})\in\Z_{\ge 0}^{s-1}$ and $\vec b=(b_1,\ldots,b_{s-1})\in\Z_{\ge 0}^{s-1}$ are arbitrary vectors and $p,p'\in\Z_{\ge 0}$ are integers such that $p\ne p'$ and 
$$
p-1-\sum_{i=1}^{s-1}(a_i+1)\ge 0,\qquad p'-1-\sum_{i=1}^{s-1}(b_i+1)\ge 0.
$$ 
They construct a new $\mathcal E$-module as a subquotient of $\mathcal F_1(u_1)\otimes\ldots\otimes\mathcal F_s(u_s)$. This module is denoted by $\M^{p,p'}_{\vec a,\vec b}$ and its character is denoted by $\chi^{p,p'}_{\vec a,\vec b}$. These characters are connected with the Virasoro characters in the following way. In \cite{FFJMM} it is proved that if $p'>p>1,gcd(p',p)=1$ and $s=2$, then
$$
\chi^{p,p'}_{a_1,b_1}=\frac{1}{(q)_{\infty}}\overline\chi^{p,p'}_{a_1+1,b_1+1}.
$$

Let us make a remark about the symmetries of the character $\chi^{p,p'}_{\vec a,\vec b}$. For a vector $\vec c\in\Z^{s-1}$ and an integer $m$ we define the vectors $\tau(\vec c,m)=(\tau(\vec c,m)_1,\ldots,\tau(\vec c,m)_{s-1})$ and $\sigma(\vec c,m)=(\sigma(\vec c,m)_1,\ldots,\sigma(\vec c,m)_{s-1})$ as follows: 
$$
\tau(\vec c,m)_i=c_{i+1},\qquad \sigma(\vec c, m)_i=c_{s+1-i},
$$
where $c_s=m-s-\sum_{i=1}^{s-1}c_i$. Then we have (see \cite{FFJMM})
\begin{gather}\label{symmetries}
\chi^{p,p'}_{\vec a,\vec b}=\chi^{p,p'}_{\tau(\vec a,p),\tau(\vec b,p')}=\chi^{p,p'}_{\sigma(\vec a,p),\sigma(\vec b,p')}.
\end{gather}

\subsection{Conjecture $1$: arbitrary $\alpha,\beta$}\label{conjecture}

Consider a vector $\ow\in\Z^r$ and numbers $\alpha,\beta\ge 1$ such that $0\le w_i<\alpha+\beta$ and $gcd(\alpha,\beta)=1$. Let $a_i$ be the number of $j$ such that $w_j=i$, i.e. $a_i=\sharp\{j|w_j=i\}$. The numbers $\alpha$ and $\alpha+\beta$ are coprime, therefore there exists the unique number $\alpha'$ such that $0\le \alpha'<\alpha+\beta$ and $\alpha'\alpha=1\pmod{\alpha+\beta}$. We define the vector $\vec{a'}=(a'_0,a'_1,\ldots,a'_{\alpha+\beta-1})$ as follows 
$$
a'_i=a_{\alpha' i\pmod{\alpha+\beta}}.
$$
We define the vector $\vec{a''}\in\Z^{\alpha+\beta-1}$ as the vector $\vec{a'}$ without the last coordinate. Let $\bnull=(0,0,\ldots,0)\in\Z^{\alpha+\beta-1}$. 

\begin{conjecture}\label{main conjecture}
$$
\sum_{n\ge 0}h_0\left(\M(r,n)^{T^{\ow}_{\alpha,\beta}}\right)q^n=(q^{\alpha+\beta};q^{\alpha+\beta})_\infty\chi^{\alpha+\beta,\alpha+\beta+r}_{\bnull,\vec{a''}}.
$$
\end{conjecture}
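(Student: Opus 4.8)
The plan is to prove Conjecture~\ref{main conjecture} by reducing the count $h_0\!\left(\M(r,n)^{T^{\ow}_{\alpha,\beta}}\right)$ to a purely combinatorial problem on $r$-tuples of Young diagrams and then matching the resulting generating series with the fermionic formula for the character $\chi^{\alpha+\beta,\alpha+\beta+r}_{\bnull,\overline a''}$. The first step is to record, from the quiver description, that $\M(r,n)^T$ consists of isolated points $p_{\vec\lambda}$ indexed by $r$-tuples of partitions $\vec\lambda=(\lambda^{(1)},\dots,\lambda^{(r)})$ with $\sum_k|\lambda^{(k)}|=n$, together with the Nakajima--Yoshioka description of the tangent character at $p_{\vec\lambda}$ as a sum of monomials $\frac{e_l}{e_k}t_1^{i}t_2^{j}$ built from arm and leg lengths. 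Since $\M(r,n)$ is smooth, $\M(r,n)^{T^{\ow}_{\alpha,\beta}}$ is a smooth $(T/T^{\ow}_{\alpha,\beta})$-variety whose own fixed points are again the $p_{\vec\lambda}$. Using a Bialynicki--Birula analysis for a generic cocharacter of $T/T^{\ow}_{\alpha,\beta}$, I would show that its connected components coincide with the connected components of the graph on $\{p_{\vec\lambda}\}$ whose edges are the $T$-invariant rational curves lying in the fixed locus. Restricting the weight $\frac{e_l}{e_k}t_1^{i}t_2^{j}$ to $T^{\ow}_{\alpha,\beta}$ yields $t^{w_l-w_k+\alpha i+\beta j}$, so an edge survives exactly when $w_k-w_l=\alpha i+\beta j$, turning $h_0$ into the number of connected components of an explicit box-move graph.

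The key simplification is that $\beta\equiv-\alpha\pmod{\alpha+\beta}$, so the survival condition reads $\alpha(i-j)\equiv w_k-w_l\pmod{\alpha+\beta}$; as $i-j$ is the content of the moved box, a move between colors $k$ and $l$ is allowed only at contents in the residue class $\alpha'(w_k-w_l)\pmod{\alpha+\beta}$, where $\alpha'\alpha\equiv1\pmod{\alpha+\beta}$. This is precisely why $\alpha'$ enters the conjecture. I would therefore encode each $\vec\lambda$ by $r$ colored Maya diagrams and attach, to each residue class in $\Z/(\alpha+\beta)$, a ``charge'' that is conserved by every allowed move. Grouping colors by the value of $w_k$ gives the multiplicities $a_i$, and relabelling residues by the $\alpha'$-twist is exactly the passage $\overline a\mapsto\overline a'$; discarding the overall (last) charge is the passage to $\overline a''$, while the excitations transverse to the charge lattice remain unconstrained. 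A connected component should then be pinned down by its charge vector, and I would exhibit a canonical minimal representative in each class to obtain a clean bijection between components and reachable charge sectors.

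Summing $q^n$ over components, organized by charge sector, should reproduce the right-hand side. As a consistency check, the data $p=\alpha+\beta$, $p'=\alpha+\beta+r$, $\overline a=\bnull$, $\overline b=\overline a''$ saturate the first FFJMM inequality, $p-1-\sum_i(a_i+1)=0$, which forces the $\overline a$-slot to be $\bnull$, and satisfy the second, $p'-1-\sum_i(a''_i+1)=a'_{\alpha+\beta-1}\ge0$ (using $\sum_i a'_i=r$); thus $\chi^{\alpha+\beta,\alpha+\beta+r}_{\bnull,\overline a''}$ is well defined. The symmetries~\eqref{symmetries} should correspond to the residual relabelling ambiguity of the charge (reordering colors and the $\tau,\sigma$ reindexing), and the prefactor $\prod_{i\ge1}(1-q^{(\alpha+\beta)i})$ should cancel against the period-$(\alpha+\beta)$ part of the bosonic denominators inside $\chi$, leaving exactly the component-count series. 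For $\alpha=\beta=1$ this collapses to the computation behind Theorem~\ref{main theorem}: one checks $\prod_i(1-q^{2i})\cdot\chi^{2,r+2}_{(0),(r-m)}=\prod_n(1+q^n)\,\overline\chi^{2,r+2}_{1,r-m+1}$, which equals the right-hand side of~\eqref{equation:main theorem} after the Virasoro symmetry $\overline\chi^{2,r+2}_{1,m+1}=\overline\chi^{2,r+2}_{1,r-m+1}$.

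The main obstacle is this final matching. Unlike the homogeneous case, for $\alpha,\beta>1$ the equation $\alpha i+\beta j=w_k-w_l$ has several solutions, so the box-move graph is genuinely richer, and the hardest point is to prove that graph-connectivity is governed by the charge vector alone, i.e.\ that any two tuples of equal charge are joined by a chain of allowed curves with no hidden obstruction, and that the resulting $q$-enumeration reproduces the fermionic sum coefficient by coefficient. A further structural subtlety is that the right-hand side is built from the $\mathcal E$-module on the $K$-theory of $\coprod_n\M(\alpha+\beta,n)$, i.e.\ of rank $\alpha+\beta$ rather than $r$, so the identity is genuinely a cross-rank statement with no naive geometric identification available. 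This is exactly where the representation-theoretic route of Section~\ref{Yangian} should help: realizing the subquotient $\M^{\alpha+\beta,\alpha+\beta+r}_{\bnull,\overline a''}$ directly through the $\mathcal E$-action on $K$-theory (as reviewed in Section~\ref{quantum continious}) and matching a character basis with the components of the fixed locus would bypass the bare combinatorics, and I expect it to furnish the cleanest proof.
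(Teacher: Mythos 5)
The statement you are proving is Conjecture~\ref{main conjecture}: the paper offers no proof of it (it explicitly defers a possible proof, via the toroidal Yangian, to future work), so there is no argument of the authors to compare yours against. Your text, too, is a programme rather than a proof, and you say so yourself; the question is whether the gaps you leave are routine or essential, and they are essential. First, the reduction of $h_0\bigl(\M(r,n)^{T^{\ow}_{\alpha,\beta}}\bigr)$ to connectivity of a ``box-move graph'' is not justified as stated. The pair $(\M(r,n),T)$ is not GKM: the tangent weights in~\eqref{weight decomposition} occur with multiplicities and are not pairwise linearly independent, so a tangent weight restricting trivially to $T^{\ow}_{\alpha,\beta}$ only tells you that $\dim T_p\bigl(\M^{T^{\ow}_{\alpha,\beta}}\bigr)>0$; it does not produce a $T$-invariant curve inside the fixed locus joining two \emph{specified} fixed points $p_{\vec\lambda}$, $p_{\vec\mu}$, which is what your edges require. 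Note that the paper's own proved case avoids this entirely: since $0\le w_i<\alpha+\beta$, Proposition~\ref{proposition:compactness} applies in your setting as well, and the Bialynicki--Birula decomposition of the compact fixed locus lets one count components as zero-dimensional cells (as in Proposition~\ref{proposition:fixed points}), replacing any connectivity argument by an explicit arm/leg condition on $r$-tuples of diagrams. That would be the natural, and rigorous, first step here too.

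Second, and more seriously, the two claims that carry all the content are asserted, not proved: (i) that graph components are classified by your conserved charge vector, with a canonical minimal representative in each class (you yourself flag that for $\alpha,\beta>1$ the equation $\alpha i+\beta j=w_k-w_l$ has many solutions and that no obstruction-freeness is established); and (ii) that the resulting $q$-enumeration matches $\chi^{\alpha+\beta,\alpha+\beta+r}_{\bnull,\overline a''}$ coefficient by coefficient. Even in the homogeneous case the paper needs genuinely nontrivial $q$-series input to close the analogous step (the fermionic manipulations of Section~\ref{subsection:fermionic}, the Andrews--Gordon identities for odd $r$, the Corteel--Mallet identity for even $r$), and no analogue of this machinery is proposed for general $\alpha,\beta$; the cross-rank nature of the identity, which you correctly identify, is precisely why the statement remains open. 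What is sound in your write-up are the consistency checks: the FFJMM admissibility conditions do hold, with $p-1-\sum_i(a_i+1)=0$ and $p'-1-\sum_i(a''_i+1)=a'_{\alpha+\beta-1}\ge 0$ since $\sum_i a'_i=r$; and the $\alpha=\beta=1$ degeneration is correct, using $\chi^{2,r+2}_{0,r-m}=\bigl(\prod_{n\ge1}(1-q^n)\bigr)^{-1}\overline\chi^{2,r+2}_{1,r-m+1}$, Euler's identity $\prod_{n\ge1}(1+q^n)=\prod_{n\ge1}(1-q^{2n-1})^{-1}$, and the symmetry $\overline\chi^{2,r+2}_{1,m+1}=\overline\chi^{2,r+2}_{1,r+1-m}$, recovering~\eqref{equation:main theorem}. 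But a correct specialization and well-posedness check do not substitute for the missing connectivity and character-matching arguments, so what you have is a reasonable research plan for the conjecture, not a proof.
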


\begin{remark}
We used the multiplication by $\alpha^{-1}\pmod{\alpha+\beta}$ in the definition of $\overline a'$. If one uses the multilication by $\beta^{-1}\pmod{\alpha+\beta}$, then the character $\chi$ will be the same. It follows from~\eqref{symmetries} and the fact that $\beta^{-1}=-\alpha^{-1}\pmod{\alpha+\beta}$. 
\end{remark}

\subsection{Conjecture $2$: Betti numbers}

We denote by $P_q(X)$ the Poincare polynomial $\sum_{i\ge 0}\dim H_i(X)q^{\frac{i}{2}}$ of a manifold $X$. 

In \cite{Buryak1} we proposed the following conjecture 
\begin{gather}\label{old conjecture}
\sum_{n\ge 0}P_q\left(\M(1,n)^{T_{\alpha,\beta}}\right)t^n=\prod_{\substack{i\ge 1\\(\alpha+\beta)\nmid i}}\frac{1}{1-t^i}\prod_{i\ge 1}\frac{1}{1-qt^{(\alpha+\beta)i}}.
\end{gather}
We conjecture an analogue of \eqref{old conjecture} for the case $r=2$ and $\alpha=\beta=1$. 
\begin{conjecture}
\begin{align*}
&\sum_{n\ge 0}P_q\left(\M(2,n)^{T^{(0,0)}_{1,1}})\right)t^n=\prod_{4\nmid i}\frac{1}{(1-t^i)(1-q t^i)}\prod_{i\ge 1}\frac{1}{(1-q t^{4i})(1-q^2t^{4i})},\\
&\sum_{n\ge 0}P_q\left(\M(2,n)^{T^{(0,1)}_{1,1}})\right)t^n=\\
&=\prod_{n\ge 1}\frac{(1-t^{4n-2})}{(1-t^{2n-1})^2(1-qt^{4n-2})^2(1-q^2t^{4n-2})(1-qt^{4n})^2}.
\end{align*}
\end{conjecture}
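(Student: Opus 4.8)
The plan is to upgrade the combinatorial enumeration of connected components that underlies Theorem~\ref{main theorem} into an enumeration of Bialynicki--Birula cells, which then reads off every Betti number at once. Recall that the fixed points of the full torus $T=(\C^*)^4$ acting on $\M(2,n)$ are isolated and indexed by pairs of Young diagrams $(\lambda,\mu)$ with $|\lambda|+|\mu|=n$. Each such point lies in exactly one connected component $Z$ of the fixed locus $\M(2,n)^{T^{\ow}_{1,1}}$, and these components are precisely the ones counted in the proof of Theorem~\ref{main theorem}. The first step is to record the geometry of $Z$: being the fixed locus of a reductive subgroup inside the smooth variety $\M(2,n)$ it is smooth, and it is proper (exactly as the punctual Hilbert scheme is proper in the rank-one case). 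Moreover the residual three-dimensional torus $T'=T/T^{\ow}_{1,1}$ acts on $Z$, and its fixed points are exactly the $T$-fixed points lying in $Z$, since a point fixed by both $T^{\ow}_{1,1}$ and $T'$ is fixed by all of $T$. In particular the $T'$-action on each $Z$ has isolated fixed points.

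Granting this, the second step is to choose a generic one-parameter subgroup of $T'$ and form the associated Bialynicki--Birula decomposition of each $Z$ into affine cells indexed by its $T$-fixed points. Since $Z$ is smooth and proper this is a perfect cell decomposition, concentrated in even cohomological degree, so that $P_q(Z)=\sum_{(\lambda,\mu)\in Z}q^{\,d(\lambda,\mu)}$ where $d(\lambda,\mu)$ is the complex dimension of the cell through $(\lambda,\mu)$. Summing over all components and all $n$ collapses the left-hand side of the conjecture to a single bigraded sum over pairs of partitions,
\[
\sum_{n\ge 0}P_q\!\left(\M(2,n)^{T^{\ow}_{1,1}}\right)t^n=\sum_{(\lambda,\mu)}q^{\,d(\lambda,\mu)}\,t^{\,|\lambda|+|\mu|}.
\]

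The third step is to make the statistic $d(\lambda,\mu)$ explicit. For this I would invoke the known description of the $T$-character of the tangent space $T_{(\lambda,\mu)}\M(2,n)$ (see \cite{Nakajima}) as a sum of monomials in the two $\P^2$-weights and the framing weights $w_1,w_2$, recorded through the arm and leg lengths of the boxes of $\lambda$ and $\mu$. Restricting this character to $T^{\ow}_{1,1}$ (that is, equating the two $\P^2$-weights and using the framing weights prescribed by $\ow$) turns each monomial into an integer; the monomials that restrict to $0$ span the tangent space $T_{(\lambda,\mu)}Z$ to the fixed component, and a generic cocharacter of $T'$ assigns to each of them a nonzero integer. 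Then $d(\lambda,\mu)$ is the number of these that are positive, an explicit statistic on pairs of partitions that is symmetric in $\lambda,\mu$ when $\ow=(0,0)$ and carries the asymmetry $w_2-w_1=1$ when $\ow=(0,1)$. This asymmetry is what should distinguish the two product shapes stated in the conjecture.

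The final and hardest step is to evaluate the resulting bigraded sum over pairs of partitions and identify it with the stated infinite products. I would try either to reorganize $d(\lambda,\mu)$ so that the sum factors over independent ``hook'' or column contributions, or to recognise it as a two-variable refinement of the quantum continuous $\mathfrak{gl}_\infty$ character $\chi^{2,4}_{\bnull,\overline a''}$ of Conjecture~\ref{main conjecture}, whose product form would then furnish the answer. At every stage the specialization $q\to 0$ provides a strong check: it collapses each $P_q(Z)$ to $h_0(Z)$ and must reproduce the $r=2$ case of Theorem~\ref{main theorem}. The genuine obstacles are the precise determination of the cell statistic $d(\lambda,\mu)$ --- in particular the careful bookkeeping of the zero-weight directions tangent to $Z$ and the choice of a generic secondary weight of $T'$ --- and the closed-form evaluation of the two-variable generating function.
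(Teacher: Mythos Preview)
The statement you are attempting to prove is labelled a \emph{Conjecture} in the paper, and the paper offers no proof of it; there is therefore nothing to compare your argument against. What the paper does prove is the $q=1$ shadow (Euler characteristic count via $T$-fixed points) and, separately, the $q=0$ shadow (the $h_0$ count of Theorem~\ref{main theorem}); the two-variable refinement is left open.

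Your proposal is a reasonable outline of how one would try to attack the conjecture, and the first three steps are essentially the mechanism already used in Section~\ref{section:cellular decomposition}: properness of the fixed locus is Proposition~\ref{proposition:compactness}, the Bialynicki--Birula decomposition with respect to a generic one-parameter subgroup is exactly the construction in the proof of Proposition~\ref{proposition:fixed points}, and the tangent character~\eqref{weight decomposition} gives the cell dimension $d(\lambda,\mu)$ explicitly. So up to that point you are not proposing anything new---you are recapitulating the paper's own setup, now keeping track of all cells rather than only the zero-dimensional ones.

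The genuine content of a proof would lie entirely in your ``final and hardest step'', and there your proposal is, by your own admission, only a wish: you say you would ``try either to reorganize $d(\lambda,\mu)$ so that the sum factors'' or to ``recognise it as a two-variable refinement'' of a known character, without indicating how either would go. Neither route is known to work; the paper's proof of the one-variable case already requires the Gordon--Andrews identities and the overpartition identities of \cite{Corteel}, and no two-variable analogue of those is invoked or suggested. So what you have written is not a proof but a restatement of the problem in the natural combinatorial language, together with an honest list of the obstacles that remain. That is a fair description of the state of affairs, but it does not settle the conjecture.
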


\subsection{The toroidal Yangian} \label{Yangian}

For simplicity in this section we consider the case $\ow=\vec 0$. 

At the moment we can't relate the varieties $\M(r,n)^{T_{\alpha,\beta}}$ to $\widehat{\widehat{gl}}_{1}$-toroidal algebra. However we can see a relation with the toroidal algebra $\widehat{\widehat{gl}}_{\alpha+\beta}$. Let $\Gamma_{\alpha+\beta}$ be the subgroup of $\C^*\times\C^*$ defined by
$$
\Gamma_{\alpha+\beta}=\left\{(\zeta^s,\zeta^{-s})\left|s=0,1,\ldots,\alpha+\beta-1,\zeta=e^{\frac{2\pi i}{\alpha+\beta}}\right.\right\}.
$$
In \cite{Varagnolo} it is proved that the toroidal Yangian acts on the equivariant homology groups $\bigoplus_n H_*^{\C^*\times\C^*}\left(\M(r,n)^{\Gamma_{\alpha+\beta}}\right)$. We want to consider the localized homology groups $\bigoplus_n H_*^{T_{\alpha,\beta}}\left(\M(r,n)^{\Gamma_{\alpha,\beta}}\right)$ and a filtration in them given by a dimension of a support. This filtration is increasing and the lowest level of it has a basis enumerated by the irreducible components of $\M(r,n)^{T_{\alpha,\beta}}$. The toroidal Yangian also has a filtration such that the lowest level is isomorphic to $\widehat{sl}_{\alpha+\beta}$. The filtration on the Yangian induces a filtration on the representation and we suppose that it is exactly the filtration given by a dimension of a support. Then the lowest level of this filtration is the irreducible integrable representation of $\widehat{sl}_{\alpha+\beta}$ of level $r$. Suppose $\alpha=\beta=1$. It is well-known that the characters of the integrable $\widehat{sl}_2$-modules of level $r$ in the principal grading coincide (up to the factor $(-q)_\infty$) with the characters of the irreducible representations of the Virasoro algebra that come from $(2,r+2)$-models. Thus we get~\eqref{equation:main theorem}. Conjecture~\ref{main conjecture} corresponds to the case of general $\alpha,\beta$. There is also a possible way to apply the representation theory of the toroidal Yangian to the proof of the other conjectures in this paper. We hope to develop these ideas in a forthcoming paper.

\subsection{Organization of the paper}

In Section~\ref{section:moduli space} we recall the quiver description of the moduli space~$\M(r,n)$ and find a sufficient condition for the varieties $\M(r,n)^{T_{\alpha,\beta}^\ow}$ to be compact. Compactness of the varieties~$\M(r,n)^{T_{1,1}^{\ow(m)}}$ is important in the proof of Theorem~\ref{main theorem}. In Section~\ref{section:cellular decomposition} we construct a cellular decomposition of $\M(r,n)^{T^{\ow(m)}_{1,1}}$ and obtain a combinatarial formula for the number of the irreducible components. In Section~\ref{section:proof} we analyse this combinatorial formula and give a proof of Theorem~\ref{main theorem}. 

\subsection{Acknowledgments}
The authors are grateful to S. M. Gusein-Zade, M. Finkelberg, S. Shadrin and J. Stokman for useful discussions. 

A. B. is partially supported by a Vidi grant of the Netherlands Organization of Scientific Research, by the grants RFBR-10-01-00678, NSh-4850.2012.1 and the Moebius Contest Foundation for Young Scientists. Research of B. F. is partially supported by RFBR initiative interdisciplinary project grant 09-02-12446-ofi-m, by RFBR-CNRS grant 09-02-93106, RFBR grants 08-01-00720-a, NSh-3472.2008.2 and 07-01-92214-CNRSL-a.

\section{Moduli space of sheaves on $\P^2$}\label{section:moduli space}

Here we recall the quiver description of the moduli space~$\M(r,n)$ and find a sufficient condition for the varieties $\M(r,n)^{T_{\alpha,\beta}^\ow}$ to be compact.

\subsection{Quiver description of $\M(r,n)$}\label{subsection:quiver description}

The variety $\M(r,n)$ has the following quiver description (see e.g.\cite{Nakajima}).
\begin{gather*}
\M(r,n)\cong\left.\left\{(B_1,B_2,i,j)\left|
\begin{smallmatrix}
1) [B_1,B_2]+ij=0\\
2) \text{(stability) There is no subspace} \\
\text{$S\subsetneq\C^n$ such that $B_{\alpha}(S)\subset S$ ($\alpha=1,2$)}\\
\text{and $\mathop{Im}(i)\subset S$} 
\end{smallmatrix}\right.\right\}\right/GL_n(\C),
\end{gather*}
where $B_1,B_2\in End(\C^n), i\in Hom(\C^r,\C^n)$ and $j\in Hom(\C^n,\C^r)$ with the action of $GL_n(\C)$ given by 
\begin{gather*}
g\cdot(B_1,B_2,i,j)=(gB_1g^{-1},gB_2g^{-1},gi,jg^{-1})
\end{gather*} 
for $g\in GL_n(\C)$. 

In terms of Section~\ref{subsection:quiver description} the $T$-action on $\M(r,n)$ is given by (see e.g.\cite{Nakajima2})
\begin{gather*}
(t_1,t_2,e_1,e_2,\ldots,e_r)\cdot[(B_1,B_2,i,j)]=[(t_1B_1,t_2B_2,ie^{-1},t_1t_2ej)].
\end{gather*}

\subsection{Compactness of $\M(r,n)^{T_{\alpha,\beta}^{\ow}}$}\label{compactness}

\begin{proposition}\label{proposition:compactness}
Suppose that $\max\limits_{1\le i\le r}w_i-\min\limits_{1\le i\le r}w_i<\alpha+\beta$, then for any $n$ the variety $\M(r,n)^{T^{\ow}_{\alpha,\beta}}$ is compact.
\end{proposition}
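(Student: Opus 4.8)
The plan is to realize the fixed locus as a closed subvariety of a single fiber of a projective morphism, and hence as a compact space. Recall from Section~\ref{subsection:quiver description} that $\M(r,n)$ is a GIT quotient of the affine variety $\mu^{-1}(0)$ by $GL_n(\C)$, where $\mu(B_1,B_2,i,j)=[B_1,B_2]+ij$ and the stable locus is cut out by a nontrivial character of $GL_n(\C)$. Attached to this construction there is a canonical $T$-equivariant projective morphism
$$
\pi\colon\M(r,n)\longrightarrow\M_0(r,n):=\mathrm{Spec}\,\C[\mu^{-1}(0)]^{GL_n(\C)}
$$
onto the affine quotient (see \cite{Nakajima}). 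Since $\pi$ is projective, each of its fibers is compact; it therefore suffices to show that $\M(r,n)^{T^{\ow}_{\alpha,\beta}}$ is contained in one fiber, for then it is a closed subset of a compact space and hence compact.

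First I would show that $\pi$ collapses the fixed locus to the vertex of the affine cone $\M_0(r,n)$. By the First Fundamental Theorem of invariant theory for $GL_n(\C)$, the ring $\C[\mu^{-1}(0)]^{GL_n(\C)}$ is generated by the trace invariants $\mathrm{tr}\,W(B_1,B_2)$, where $W$ runs over words in $B_1,B_2$, together with the matrix entries of $j\,W(B_1,B_2)\,i$. The crux is to compute the $T^{\ow}_{\alpha,\beta}$-weight of each generator from the formula
$$
(t^\alpha,t^\beta,t^{w_1},\ldots,t^{w_r})\cdot(B_1,B_2,i,j)=(t^\alpha B_1,t^\beta B_2,i\,e^{-1},t^{\alpha+\beta}e\,j),\qquad e=\mathrm{diag}(t^{w_1},\ldots,t^{w_r}).
$$
A word $W$ with $a$ factors $B_1$ and $b$ factors $B_2$ gives $\mathrm{tr}\,W$ of weight $\alpha a+\beta b$, which is strictly positive whenever $W$ is nonempty since $\alpha,\beta\ge1$. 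The entry in row $k$, column $l$ of $j\,W\,i$ picks up $t^{\alpha+\beta}$ and $t^{w_k}$ from $j$, $t^{\alpha a+\beta b}$ from $W$, and $t^{-w_l}$ from $i$, so its weight is $(\alpha+\beta)+(w_k-w_l)+\alpha a+\beta b$.

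This is exactly where the hypothesis enters. Since $w_k-w_l\ge\min_i w_i-\max_i w_i$, the hypothesis $\max_i w_i-\min_i w_i<\alpha+\beta$ gives
$$
(\alpha+\beta)+(w_k-w_l)+\alpha a+\beta b\ge(\alpha+\beta)-\left(\max_i w_i-\min_i w_i\right)>0,
$$
so every non-constant generator has strictly positive weight. Hence the invariant ring is positively graded and $\M_0(r,n)$ is a cone; moreover, along the fixed locus any homogeneous generator $f$ of nonzero weight $d$ satisfies $f=t^d f$ for all $t$ and therefore vanishes. Thus all non-constant generators vanish on $\M(r,n)^{T^{\ow}_{\alpha,\beta}}$, so $\pi$ sends it to the vertex. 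This puts the fixed locus inside the single fiber $\pi^{-1}(\mathrm{vertex})$, which is compact, and we are done.

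I expect the weight bookkeeping above to be entirely routine; the genuinely new input is the elementary estimate in the displayed inequality, which is precisely what the gap condition $\max_i w_i-\min_i w_i<\alpha+\beta$ is designed to secure. The two structural facts I rely on, namely the explicit generators of $\C[\mu^{-1}(0)]^{GL_n(\C)}$ and the projectivity of $\pi$, are standard in the quiver-variety framework, so the only care needed is to confirm that they hold for arbitrary rank $r$ rather than just $r=1$; this I would handle by citing \cite{Nakajima} and \cite{Nakajima2}.
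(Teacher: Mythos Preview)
Your proposal is correct and follows essentially the same approach as the paper: both arguments use the projective morphism $\pi\colon\M(r,n)\to\M_0(r,n)$, invoke the same generators of the invariant ring (traces of words in $B_1,B_2$ and linear forms in $jWi$), and use the gap hypothesis to show that the fixed locus maps to a single point. The only cosmetic difference is that the paper first establishes $j=0$ on the fixed locus via an explicit weight decomposition of $\C^n$ and then argues separately that traces vanish, whereas you compute the $T^{\ow}_{\alpha,\beta}$-weights of all generating invariants directly and conclude they vanish on fixed points in one stroke; this is a mild streamlining of the same idea.
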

\begin{proof}
By definition, a point $[(B_1,B_2,i,j)]\in\M(r,n)$ is fixed under the action of ${T^{\ow}_{\alpha,\beta}}$ if and only if there exists a homomorphism $\lambda\colon \C^*\to GL_n(\C)$ satisfying the following conditions: 
\begin{align}\label{formula:fixed point}
t^{\alpha}B_1&=\lambda(t)^{-1}B_1\lambda(t),\notag\\
t^{\beta}B_2&=\lambda(t)^{-1}B_2\lambda(t),\\
i\circ diag(t^{w_1},t^{w_2},\ldots,t^{w_r})^{-1}&=\lambda(t)^{-1}i,\notag\\
t^{\alpha+\beta}diag(t^{w_1},t^{w_2},\ldots,t^{w_r})\circ j&=j\lambda(t).\notag
\end{align} 
Suppose that $[(B_1,B_2,i,j)]$ is a fixed point. Then we have the weight decomposition of $\C^n$ with respect to $\lambda(t)$, i.e. $\C^n=\bigoplus_{k\in\Z} V_k$, where $V_k=\{v\in \C^n|\lambda(t)\cdot v=t^kv\}$. We also have the weight decomposition of $\C^r$, i.e. $\C^r=\bigoplus_{k\in\Z} W_k$, where $W_k=\{v\in \C^r|diag(t^{w_1},\ldots,t^{w_r})\cdot v=t^kv\}$. From the conditions \eqref{formula:fixed point} it follows that the only components of $B_1$, $B_2$, $i$ and $j$ that might survive are 
\begin{align}
B_1&\colon V_k\to V_{k-\alpha},\label{eq:condition1}\\
B_2&\colon V_k\to V_{k-\beta},\label{eq:condition2}\\
i&\colon W_k\to V_k,\notag\\
j&\colon V_k\to W_{k-\alpha-\beta}.\label{eq:condition4}
\end{align}
From the stability condition it follows that 
\begin{gather*}
V_k=0, \text{if $k>\max_{1\le i\le r} w_i$}.
\end{gather*}
Then from the condition $\max\limits_{1\le i\le r}w_i-\min\limits_{1\le i\le r}w_i<\alpha+\beta$ and~\eqref{eq:condition4} it follows that $j=0$. 

Consider the variety $\M_0(r,n)$ from \cite{Nakajima2}. It is defined as an affine algebro-geometric quotient 
$$
\M_0(r,n)=\{(B_1,B_2,i,j)|[B_1,B_2]+ij=0\}//GL_n(\C).
$$
It can be viewed as the set of closed orbits in $\{(B_1,B_2,i,j)|[B_1,B_2]+ij=0\}$. There is a morphism $\pi\colon\M(r,n)\to\M_0(r,n)$. It maps a point $[(B_1,B_2,i,j)]\in \M(r,n)$ to the unique closed orbit that is contained in the closure of the orbit of $(B_1,B_2,i,j)$ in $\{(B_1,B_2,i,j)|[B_1,B_2]+ij=0\}$. The variety $\M_0(r,n)$ is affine and the morphism $\pi$ is projective (see e.g.\cite{Nakajima2}).
 
By \cite{Lusztig} the coordinate ring of $\M_0(r,n)$ is generated by the following two types of functions:
\begin{itemize}
\item[a)]
$tr(B_{a_N}B_{a_{N-1}}\cdots B_{a_1}\colon\C^n\to\C^n)$, where $a_i=1$ or $2$.
\item[b)]
$\chi(jB_{a_N}B_{a_{N-1}}\cdots B_{a_1}i)$, where $a_i=1$ or $2$, and $\chi$ is a linear form on $End(\C^r)$.
\end{itemize}
From~\eqref{eq:condition1} and~\eqref{eq:condition2} it follows that the equation 
\begin{gather}\label{eq:null}
\pi^*f|_{\M(r,n)^{T^{\ow}_{\alpha,\beta}}}=0
\end{gather}
holds for any function $f$ of type a). We observed that for any point $[(B_1,B_2,i,j)]\in\M(r,n)^{T^\ow_{\alpha,\beta}}$ we have $j=0$. Hence, \eqref{eq:null} holds for any function $f$ of type b).

We see that the image of $\M(r,n)^{T^\ow_{\alpha,\beta}}$ under the map $\pi$ is a point. Therefore the variety $\M(r,n)^{T^\ow_{\alpha,\beta}}$ is compact.  
\end{proof}

\section{Cellular decomposition of $\M(r,n)^{T^{\ow(m)}_{1,1}}$}\label{section:cellular decomposition}

In this section we construct a cellular decomposition of $\M(r,n)^{T^{\ow(m)}_{1,1}}$ and obtain a combinatorial formula for the number of the irreducible components.

For a partition $\lambda=\lambda_1,\lambda_2,\ldots,\lambda_k, \lambda_1\ge\lambda_2\ge\ldots\ge\lambda_k>0$ let $|\lambda|=\sum_{i=1}^k\lambda_i$ and $l(\lambda)=k$. We denote by $\mathcal P$ the set of all partitions and by $\mathcal D\mathcal P$ the set of partitions with distinct parts.

Let $S(r,m)$ be the set of $r$-tuples $(\lambda^{(1)},\lambda^{(2)},\ldots,\lambda^{(r)})$ of partitions $\lambda^{(i)}\in\mathcal D\mathcal P$ such that $\lambda^{(i)}_1\le l(\lambda^{(i+1)})+\delta_{i,m}$, for $1\le i\le r-1$. Let 
\begin{gather*}
S(r,m)_n=\left\{(\lambda^{(1)},\ldots,\lambda^{(r)})\in S(r,m)\left|\sum_{i=1}^r|\lambda^{(i)}|=n\right.\right\}.
\end{gather*}
\begin{proposition}\label{proposition:fixed points}
\begin{gather*}
h_0\left(\M(r,n)^{T^{\ow(m)}_{1,1}}\right)=\sharp S(r,m)_n.
\end{gather*}
\end{proposition}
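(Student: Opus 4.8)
The plan is to compute $h_0$ by reducing it to a count of ``sources'' in a Bialynicki--Birula decomposition of the fixed locus, and then to match these sources combinatorially with $S(r,m)_n$. Write $X=\M(r,n)^{T^{\ow(m)}_{1,1}}$. Being the fixed locus of a torus acting on the smooth variety $\M(r,n)$, the variety $X$ is smooth, and by Proposition~\ref{proposition:compactness} (applicable since $\max_i w_i-\min_i w_i\le 1<2=\alpha+\beta$) it is compact; a smooth compact quasi-projective variety is smooth projective, and for such a variety connected and irreducible components coincide, so $h_0(X)$ also counts irreducible components. The residual torus $\overline T=T/T^{\ow(m)}_{1,1}$ acts on $X$, and I would fix a generic one-parameter subgroup $\rho\colon\C^*\to T$ so that $X^\rho=X^{\overline T}=\M(r,n)^T$. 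The latter is the classical finite set of isolated $T$-fixed points, indexed by $r$-tuples of Young diagrams $\vec\mu=(\mu^{(1)},\dots,\mu^{(r)})$ with $\sum_\alpha|\mu^{(\alpha)}|=n$.

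Next I would invoke the Bialynicki--Birula decomposition of $X$ with respect to $\rho$. Since $X$ is smooth projective with isolated $\rho$-fixed points, it is paved by the affine attracting cells $X^+_{\vec\mu}=\{x\in X\mid \lim_{t\to 0}\rho(t)\cdot x=\vec\mu\}$, whose dimension equals the number of positive $\rho$-weights on $T_{\vec\mu}X$. An affine paving forces $H_*(X)$ to be free and concentrated in even degrees, with $H_0(X)$ of rank equal to the number of zero-dimensional cells; hence $h_0(X)$ equals the number of \emph{sources}, i.e. of those $\vec\mu$ for which $T_{\vec\mu}X$ carries no positive $\rho$-weight. Because $X$ is a smooth fixed locus, $T_{\vec\mu}X=(T_{\vec\mu}\M(r,n))^{T^{\ow(m)}_{1,1}}$ is the $T^{\ow(m)}_{1,1}$-weight-zero subspace of the tangent space to the ambient moduli space.

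I would then make the source condition explicit using the known character of $T_{\vec\mu}\M(r,n)$ as a $T$-module (the Nakajima--Yoshioka box formula). Each tangent weight is a monomial $t_1^at_2^b e_\beta e_\alpha^{-1}$ with $a,b$ read from arm and leg lengths of a box relative to $\mu^{(\alpha)}$ and $\mu^{(\beta)}$; its $T^{\ow(m)}_{1,1}$-weight is $a+b+w_\beta-w_\alpha$ and its $\rho$-weight is the corresponding linear form. Selecting the weight-zero monomials and requiring that none be $\rho$-positive should split into two kinds of constraints: the diagonal terms ($\alpha=\beta$) ought to impose, for each single diagram $\mu^{(\alpha)}$, the absence of boxes with a prescribed arm--leg relation, which is exactly the condition cutting out a strict partition $\lambda^{(\alpha)}$ attached to $\mu^{(\alpha)}$; the off-diagonal terms for consecutive framings should impose the interlacing inequality $\lambda^{(i)}_1\le l(\lambda^{(i+1)})+\delta_{i,m}$, where the shift $\delta_{i,m}$ appears precisely because the framing weight $w_\alpha$ of $\ow(m)$ drops from $1$ to $0$ between positions $m$ and $m+1$.

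The main obstacle is this last combinatorial step: translating ``no positive $\rho$-weight on the weight-zero tangent space'' into the defining inequalities of $S(r,m)$, and in particular exhibiting the bijection between a source $\vec\mu$ and an $r$-tuple of distinct-part partitions. I would verify it by analysing the admissible weight-zero monomials box by box, checking that the per-diagram condition is equivalent to admitting a valid graded Hilbert function (equivalently, to a strict partition) and that the only surviving cross-terms are those governing adjacent framings, producing exactly the bound involving $l(\lambda^{(i+1)})$ and the shift at $i=m$. The genericity of $\rho$, which fixes a total order on the framings compatible with the grouping of the weight-$1$ and weight-$0$ indices, is what guarantees that this bookkeeping is unambiguous and yields precisely $\sharp S(r,m)_n$ sources, completing the proof.
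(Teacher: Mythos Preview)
Your proposal is correct and follows essentially the same route as the paper: compactness via Proposition~\ref{proposition:compactness}, a Bialynicki--Birula decomposition for a generic one-parameter subgroup acting on the fixed locus, and the identification of $0$-dimensional cells via the Nakajima--Yoshioka tangent character. The one place where the paper is more explicit than your sketch is the ``main obstacle'': a priori the vanishing conditions involve \emph{all} pairs $i>j$, not only adjacent ones, and the paper shows (via an intermediate-value argument in one direction and a telescoping chain of inequalities in the other) that the non-adjacent constraints are consequences of the strict-partition condition together with the adjacent inequalities $c_0(D_i)\le r_0(D_{i+1})+\delta_{i,m}$.
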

\begin{proof}
The set of fixed points of the $T$-action on $\M(r,n)$ is finite and is parametrized by the set of $r$-tuples $D=(D_1,D_2,\ldots,D_r)$ of Young diagrams $D_i$, such that $\sum_{i=1}^r|D_i|=n$ (see~e.g.\cite{Nakajima2}).  

For a Young diagram $Y$ let 
\begin{align*}
&r_l(Y)=|\{(i,j)\in D|j=l\}|,\\
&c_l(Y)=|\{(i,j)\in D|i=l\}|.
\end{align*}
For a point $s=(i,j)\in\Z_{\ge 0}^2$ let
\begin{align*}
&l_Y(s)=r_j(Y)-i-1,\\
&a_Y(s)=c_i(Y)-j-1,
\end{align*}
see Figure \ref{pic1}. Note that $l_Y(s)$ and $a_Y(s)$ are negative if $s\notin Y$.

\begin{figure}[h]
\begin{picture}(70,40)
\put(0,6){
\multiput(0,0)(0,5){7}{\line(1,0){5}}
\multiput(5,0)(0,5){6}{\line(1,0){5}}
\multiput(10,0)(0,5){6}{\line(1,0){5}}
\multiput(15,0)(0,5){4}{\line(1,0){5}}

\multiput(0,0)(5,0){5}{\line(0,1){5}}
\multiput(0,5)(5,0){5}{\line(0,1){5}}
\multiput(0,10)(5,0){5}{\line(0,1){5}}
\multiput(0,15)(5,0){4}{\line(0,1){5}}
\multiput(0,20)(5,0){4}{\line(0,1){5}}
\multiput(0,25)(5,0){2}{\line(0,1){5}}

\put(6.5,6.5){$s$}
\multiput(11,6)(5,0){2}{$\spadesuit$}
\multiput(5.8,10.8)(0,5){3}{$\heartsuit$}

\put(31,11){$a_Y(s)=$ number of $\heartsuit$}
\put(31,16){$l_Y(s)=$ number of $\spadesuit$}
}
\put(8.5,0){$Y$}
\end{picture}
\caption{}
\label{pic1}
\end{figure}

Let $p$ be the fixed point of the $T$-action corresponding to an $r$-tuple $D$. Let $R(T)=\Z[t_1^{\pm 1},t_2^{\pm 1},e_1^{\pm 1},e_2^{\pm 1},\ldots,e_r^{\pm 1}]$ be the representation ring of $T$. Then the weight decomposition of the tangent space $T_p(\M(r,n)$ of the variety $\M(r,n)$ at the point $p$ is given by~(see~e.g.\cite{Nakajima2})
\begin{gather}\label{weight decomposition}
T_p(\M(r,n))=\sum_{i,j=1}^r e_j e_i^{-1}\left(\sum_{s\in D_i}t_1^{-l_{D_j}(s)}t_2^{a_{D_i}(s)+1}+\sum_{s\in D_j}t_1^{l_{D_i}(s)+1}t_2^{-a_{D_j}(s)}\right).
\end{gather}
Consider an integer $\gamma$ and an integer vector $\vec v=(v_1,\ldots,v_r)$ such that 
\begin{gather}\label{general condition}
v_1\gg v_2\gg\ldots\gg v_r\gg\gamma\gg 1.
\end{gather}
It is easy to see that $\M(r,n)^{T}=\M(r,n)^{T^{\vec v}_{1,\gamma}}$. For a fixed point $p\in\M(r,n)^T$ let $C_p=\{z\in\M(r,n)^{T^{\ow(m)}_{1,1}}|\lim_{t\to 0,t\in T^{\vec{v}}_{1,\gamma}}tz=p\}$. By Proposition~\ref{proposition:compactness} the variety $\M(r,n)^{T_{1,1}^{\ow(m)}}$ is compact, hence it has a cellular decomposition with the cells $C_p$ (see \cite{B1,B2}). From~\eqref{general condition} and~\eqref{weight decomposition} it follows that the complex dimension of the cell $C_p$ is equal to
\begin{align*}
&\sum_{i=1}^r|\{s\in D_i|a_{D_i}(s)+1=l_{D_i}(s)\}|+\\
&+\sum_{r\ge i>j\ge 1}|\{s\in D_i|w_j-w_i-l_{D_j}(s)+a_{D_i}(s)+1=0\}|+\\
&+\sum_{r\ge i>j\ge 1}|\{s\in D_j|w_j-w_i+l_{D_i}(s)+1-a_{D_j}(s)=0\}|, 
\end{align*}
where $(w_1,\ldots,w_r)=\ow(m)$. Therefore, the dimension of the cell $C_p$ is equal to $0$ if and only if the following three conditions hold
\begin{align}
&\{s\in D_i|a_{D_i}(s)+1=l_{D_i}(s)\}=\emptyset, \forall 1\le i\le r,\label{null condition1}\\
&\{s\in D_i|w_j-w_i-l_{D_j}(s)+a_{D_i}(s)+1=0\}=\emptyset, \forall r\ge i>j\ge 1, \label{null condition2}\\
&\{s\in D_j|w_j-w_i+l_{D_i}(s)+1-a_{D_j}(s)=0\}=\emptyset, \forall r\ge i>j\ge 1 \label{null condition3}.
\end{align}
It is sufficient to prove that these equations are equivalent to the following system 
\begin{align}
&D_i\in\mathcal D\mathcal P,\label{null condition4}\\
&c_0(D_i)\le r_0(D_{i+1})+\delta_{i,m}, \label{null condition5}
\end{align} 
where $D_i\in\mathcal D\mathcal P$ means that nonzero lengths of columns of a Young diagram $D_i$ are distinct.
 
Suppose that equations~\eqref{null condition1},\eqref{null condition2},\eqref{null condition3} hold. Condition~\eqref{null condition4} easily follows from~\eqref{null condition1}. Suppose that $c_0(D_i)>r_0(D_{i+1})+\delta_{i,m}$. For a point $s=(0,c_0(D_i)-1)$ we have 
\begin{gather}\label{start}
\delta_{i,m}+l_{D_{i+1}}(s)+1-a_{D_i}(s)=\delta_{i,m}+l_{D_{i+1}}(s)+1\ge 0.
\end{gather}
For a point $s=(0,0)$ we have
\begin{gather}\label{end}
\delta_{i,m}+l_{D_{i+1}}(s)+1-a_{D_i}(s)=\delta_{i,m}+r_0(D_{i+1})-c_0(D_i)+1\le 0.
\end{gather}
Note that for two points $s_1=(0,y)$ and $s_2=(0,y+1)$, where $0\le y<c_0(D_{i})-1$, we have
\begin{multline}
\left(l_{D_{i+1}}(s_2)+1-a_{D_i}(s_2)\right)-\left(l_{D_{i+1}}(s_1)+1-a_{D_i}(s_1)\right)=\\
=l_{D_{i+1}}(s_2)-l_{D_{i+1}}(s_1)+1\le 1.\label{difference}
\end{multline}
From \eqref{start}, \eqref{end} and \eqref{difference} it follows that there exists a number $0\le y\le c_0(D_i)-1$ such that for a point $s=(0,y)$ we have
$$
\delta_{i,m}+l_{D_{i+1}}(s)+1-a_{D_i}(s)=0.
$$
This contradicts \eqref{null condition3}. Thus, we have proved \eqref{null condition5}.

Suppose that equations \eqref{null condition4}, \eqref{null condition5} hold. It is easy to see that \eqref{null condition1} follows from \eqref{null condition4}. Let us prove \eqref{null condition3}. Consider a point $s=(x,y)\in D_j$ and let $r\ge i>j\ge 1$. Let $s_1=(x,0)$ and $s_2=(0,0)$, we have
\begin{multline*}
w_j-w_i+l_{D_i}(s)+1-a_{D_j}(s)\stackrel{\text{by \eqref{null condition4}}}{\ge} w_j-w_i+l_{D_i}(s_1)+1-a_{D_j}(s_1)\stackrel{\text{by \eqref{null condition4}}}{\ge}\\
\ge w_j-w_i+l_{D_i}(s_2)+1-a_{D_j}(s_2)=w_j-w_i+r_0(D_i)-c_0(D_j)+1\stackrel{\text{by \eqref{null condition5}}}{>}0.
\end{multline*}
Let us prove \eqref{null condition2}. Suppose $s\in D_i$ and $r\ge i>j\ge 1$. From \eqref{null condition4} and \eqref{null condition5} it follows that
$$
l_{D_j}(s)\le l_{D_i}(s)+w_j-w_i.
$$
Thus
$$
w_j-w_i-l_{D_j}(s)+a_{D_i}(s)+1\ge-l_{D_i}(s)+a_{D_i}(s)+1\stackrel{\text{by \eqref{null condition4}}}{\ge}1.
$$
This completes the proof of the proposition.
\end{proof}

\section{Proof of Theorem \ref{main theorem}}\label{section:proof}

In this section we prove Theorem~\ref{main theorem}. By Proposition~\ref{proposition:fixed points} we have
$$
\sum_{n\ge 0}h_0\left(\M(r,n)^{T_{1,1}^{\ow(m)}}\right)q^n=\sum_{(\lambda^{(1)},\ldots,\lambda^{(r)})\in S(r,m)}q^{\sum\limits_{i=1}^r|\lambda^{(i)}|}.
$$
In Section~\ref{subsection:fermionic} we obtain fermionic expressions for the right-hand side of this equation. The main idea is to transform them to a known fermionic formula for the second infinite product on the right-hand side of~\eqref{equation:main theorem}. In Section \ref{odd case} we use the Gordon's generalization of the Rogers-Ramanujan identities to finish the proof of the theorem in the case when $r$ is odd. The case of even $r$ is covered by an identity from~\cite{Corteel}, we do it in Section \ref{even case}.  

Clearly, $S(r,r)=S(r,0)$. Therefore we have $h_0\left(\M(r,n)^{T_{1,1}^{\ow(r)}}\right)=h_0\left(\M(r,n)^{T_{1,1}^{\ow(0)}}\right)$. It is also obvious that in the case $m=r$ the right-hand side of~\eqref{equation:main theorem} is the same as in the case $m=0$. Thus it is enough to prove the theorem in the case $0\le m\le r-1$.

\subsection{Fermionic expressions for the generating series}\label{subsection:fermionic}

Let $\lambda=\lambda_1,\lambda_2,\ldots,\lambda_s$ be a partition. We will use the standart notation
\begin{gather*}
(q)_\lambda=(q)_{\lambda_1-\lambda_2}\ldots(q)_{\lambda_{s-1}-\lambda_s}(q)_{\lambda_s}.
\end{gather*}
 
\begin{proposition}
Let $0\le m\le r-1$. Then we have 
\begin{gather*}
\sum_{(\lambda^{(1)},\ldots,\lambda^{(r)})\in S(r,m)}q^{\sum\limits_{i=1}^r|\lambda^{(i)}|}=\sum_{\rho_1\ge\ldots\ge\rho_r}\frac{q^{\sum\limits_{i=1}^r\frac{\rho^2_i+\rho_i}{2}}}{(q)_{\rho}}\left(1+\sum\limits_{i=0}^{m-1} q^{\sum\limits_{j=0}^i(\rho_{r-m+j}+1)}\right).
\end{gather*}
\end{proposition}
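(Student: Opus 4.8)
The plan is to decompose each partition with distinct parts into a staircase plus an ordinary partition, rewrite the whole sum in terms of Gaussian binomial coefficients, and then transport it to the claimed fermionic form by iterating the $q$-Pascal recursion at a single ``junction''.

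First I would use the standard bijection: a partition $\lambda\in\mathcal{DP}$ with exactly $\ell$ parts equals the staircase $(\ell,\ell-1,\dots,1)$ plus an ordinary partition $\mu$ with at most $\ell$ parts, so that $|\lambda|=\binom{\ell+1}{2}+|\mu|$ and $\lambda_1=\ell+\mu_1$. Applying this to every $\lambda^{(i)}$ with $\ell_i:=l(\lambda^{(i)})$, the condition $\lambda^{(i)}_1\le l(\lambda^{(i+1)})+\delta_{i,m}$ becomes $\mu^{(i)}_1\le \ell_{i+1}-\ell_i+\delta_{i,m}$, i.e.\ $\mu^{(i)}$ is confined to an $\ell_i\times(\ell_{i+1}-\ell_i+\delta_{i,m})$ box for $i<r$, while $\mu^{(r)}$ is an arbitrary partition with at most $\ell_r$ parts. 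Summing the box generating functions (Gaussian binomials) and the free factor $1/(q)_{\ell_r}$, and reversing indices by $\rho_i:=\ell_{r+1-i}$, I obtain
\[
\sum_{(\lambda^{(1)},\dots,\lambda^{(r)})\in S(r,m)}q^{\sum_{i=1}^r|\lambda^{(i)}|}=\sum_{\rho_1,\dots,\rho_r\ge 0}\frac{q^{\sum_{i=1}^r\binom{\rho_i+1}{2}}}{(q)_{\rho_1}}\prod_{k=1}^{r-1}\binom{\rho_k+\delta_{r-k,m}}{\rho_{k+1}}_q,
\]
where $\binom{a}{b}_q:=(q)_a/((q)_b(q)_{a-b})$, and the Gaussian coefficients automatically impose $0\le\rho_{k+1}\le\rho_k+\delta_{r-k,m}$.

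The only index with $\delta_{r-k,m}=1$ is $k=p:=r-m$. In the case $m=0$ no such index occurs, the coefficients force $\rho_1\ge\dots\ge\rho_r$, and writing each $\binom{\rho_k}{\rho_{k+1}}_q$ in terms of $(q)$-Pochhammers lets the numerators telescope against $1/(q)_{\rho_1}$ to leave exactly $1/(q)_\rho$; since the correction factor is then $1$, this already proves the statement. For $m\ge 1$ the single coefficient $\binom{\rho_p+1}{\rho_{p+1}}_q$ is the sole obstruction to this telescoping, and it also relaxes the constraint to $\rho_{p+1}\le\rho_p+1$. To remove it I would apply $\binom{n+1}{k}_q=\binom{n}{k}_q+q^{\,n+1-k}\binom{n}{k-1}_q$ with $n=\rho_p$, $k=\rho_{p+1}$. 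The first term restores $\binom{\rho_p}{\rho_{p+1}}_q$ and the decreasing constraint $\rho_{p+1}\le\rho_p$. In the second term the shift $\rho_{p+1}\mapsto\rho_{p+1}+1$ turns the weight $q^{\binom{\rho_{p+1}+1}{2}}$ into $q^{\binom{\rho_{p+1}+1}{2}}q^{\rho_{p+1}+1}$ and the power $q^{\rho_p+1-\rho_{p+1}}$ into $q^{\rho_p-\rho_{p+1}}$, again produces $\binom{\rho_p}{\rho_{p+1}}_q$ (decreasing), and moves the junction to $\binom{\rho_{p+1}+1}{\rho_{p+2}}_q$ one step to the right; the two stray powers multiply to the clean factor $q^{\rho_p+1}$. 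This is an exact recursion $\Phi_p=\Psi_p+q^{\rho_p+1}\Phi_{p+1}$, with $\Psi_p$ the fully decreasing (unrelaxed) contribution.

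Iterating this recursion from $p=r-m$ rightwards, until the junction reaches the last position and simply contributes $q^{\rho_{r-1}+1}$, rewrites the whole sum as $\sum_{\rho_1\ge\dots\ge\rho_r}$ of one common summand (all coefficients now standard $\binom{\rho_k}{\rho_{k+1}}_q$, fully decreasing, weight $q^{\sum\binom{\rho_i+1}{2}}$) multiplied by the accumulated factor
\[
1+q^{\rho_{r-m}+1}+q^{(\rho_{r-m}+1)+(\rho_{r-m+1}+1)}+\dots,
\]
which is exactly the correction factor in the statement. Telescoping the standard binomials as in the $m=0$ case turns the common summand into $q^{\sum\binom{\rho_i+1}{2}}/(q)_\rho$, finishing the proof. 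The main obstacle is the bookkeeping in this cascade: one must verify that the power gained at each step is $q^{\rho+1}$ in the \emph{shifted} (final, decreasing) variable, and that after every substitution all pieces are summed over the same fully decreasing domain with the standard weight, so that the successive $\Psi$-terms coincide up to the accumulated monomial. Checking the $m=1$ case directly via the $q$-binomial theorem $\sum_b\binom{n}{b}_q q^{\binom{b+1}{2}}=\prod_{j=1}^n(1+q^j)$ gives both a consistency check and the termination step of the recursion.
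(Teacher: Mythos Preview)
Your proposal is correct and follows essentially the same route as the paper's own proof: rewrite the distinct-part partitions via the staircase decomposition to get a sum over $(\rho_1,\dots,\rho_r)$ of Gaussian binomials with a single $+1$ at position $r-m$, then iterate the $q$-Pascal relation $\genfrac[]{0pt}{}{M}{N}_q=\genfrac[]{0pt}{}{M-1}{N}_q+q^{M-N}\genfrac[]{0pt}{}{M-1}{N-1}_q$ with a shift $\rho_{p+1}\mapsto\rho_{p+1}+1$ to push the junction to the right, accumulating exactly the factor $1+\sum_{i=0}^{m-1}q^{\sum_{j=0}^{i}(\rho_{r-m+j}+1)}$. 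The only caveat is notational: writing $\Phi_p=\Psi_p+q^{\rho_p+1}\Phi_{p+1}$ is shorthand, since $\rho_p$ is a summation variable and the monomial must sit \emph{inside} the sum; the paper commits the same abuse, and once you unpack it the bookkeeping you flag as ``the main obstacle'' is exactly the computation the paper carries out.
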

\begin{proof}
The $q$-binomial coefficients are defined by
$$
\genfrac[]{0pt}{}{M}{N}_q=
\begin{cases}
\frac{(q)_M}{(q)_N(q)_{M-N}},&\text{if $M\ge N\ge 0$},\\
0,&\text{in other cases}.
\end{cases}
$$
We have (see e.g.\cite{Andrews})
\begin{align}
&\genfrac[]{0pt}{}{M}{N}_q=\genfrac[]{0pt}{}{M-1}{N}_q+q^{M-N}\genfrac[]{0pt}{}{M-1}{N-1}_q,\label{formula1}\\
&\sum_{\substack{\lambda\in\mathcal P\\\lambda_1\le M,l(\lambda)\le N}}q^{|\lambda|}=\genfrac[]{0pt}{}{M+N}{N}_q.\label{bounded partitions}
\end{align}
From \eqref{bounded partitions} it follows that
$$
\sum_{\substack{\lambda\in\mathcal D\mathcal P\\l(\lambda)=N,\lambda_1\le M}}q^{|\lambda|}=q^{\frac{N^2+N}{2}}\genfrac[]{0pt}{}{M}{N}_q.
$$
Therefore, we have
\begin{gather*}
\sum_{(\lambda^{(1)},\ldots,\lambda^{(r)})\in S(r,m)}q^{\sum\limits_{i=1}^r|\lambda^{(i)}|}=\sum_{\rho_1,\ldots,\rho_r}q^{\sum\limits_{i=1}^r\frac{\rho_i^2+\rho_i}{2}}\prod_{i=0}^{r-1}\genfrac[]{0pt}{}{\rho_i+\delta_{i,r-m}}{\rho_{i+1}},
\end{gather*}
where we define $\rho_0$ to be equal to $\infty$. Using \eqref{formula1}, we get
\begin{align*}
&\sum_{\rho_1,\ldots,\rho_r}q^{\sum_{i=1}^r\frac{\rho_i^2+\rho_i}{2}}\prod_{i=0}^{r-1}\genfrac[]{0pt}{}{\rho_i+\delta_{i,r-m}}{\rho_{i+1}}=\\
&=\sum_{\rho_1,\ldots,\rho_r}q^{\sum\limits_{i=1}^r\frac{\rho_i^2+\rho_i}{2}}\prod_{i=0}^{r-1}\genfrac[]{0pt}{}{\rho_i}{\rho_{i+1}}+q^{\rho_{r-m}-\rho_{r-m+1}+1}\sum_{\rho_1,\ldots,\rho_r}q^{\sum\limits_{i=1}^r\frac{\rho_i^2+\rho_i}{2}}\prod_{i=0}^{r-1}\genfrac[]{0pt}{}{\rho_i}{\rho_{i+1}-\delta_{i,r-m}}=\\
&=\sum_{\rho_1,\ldots,\rho_r}q^{\sum\limits_{i=1}^r\frac{\rho_i^2+\rho_i}{2}}\prod_{i=0}^{r-1}\genfrac[]{0pt}{}{\rho_i}{\rho_{i+1}}+q^{\rho_{r-m}+1}\sum_{\rho_1,\ldots,\rho_r}q^{\sum\limits_{i=1}^r\frac{\rho_i^2+\rho_i}{2}}\prod_{i=0}^{r-1}\genfrac[]{0pt}{}{\rho_i+\delta_{i,r-m+1}}{\rho_{i+1}}=...=\\
&=\sum_{\rho_1\ge\ldots\ge\rho_r}\frac{q^{\sum\limits_{i=1}^r\frac{\rho^2_i+\rho_i}{2}}}{(q)_{\rho}}\left(1+\sum\limits_{i=0}^{m-1} q^{\sum\limits_{j=0}^i(\rho_{r-m+j}+1)}\right).
\end{align*}
The proposition is proved.
\end{proof}

\begin{proposition}\label{main proposition}
Let $0\le m\le r-1$. Then we have
\begin{align*}
&\sum_{\lambda_1\ge\ldots\ge\lambda_r}\frac{q^{\sum\limits_{i=1}^r\frac{\lambda^2_i+\lambda_i}{2}}}{(q)_{\lambda}}\left(1+\sum\limits_{i=0}^{m-1} q^{\sum\limits_{j=0}^i(\lambda_{r-m+j}+1)}\right)=\\
&=\sum_{\lambda_1\ge\ldots\ge\lambda_r}\frac{q^{\sum\limits_{i=1}^r\frac{\lambda^2_i+\lambda_i}{2}}}{(q)_{\lambda}}\left(1+\sum\limits_{i=0}^{m'-1} q^{\sum\limits_{j=0}^i(\lambda_{r-1-2j}+1)}\right),
\end{align*}
where $m'=\min(m,r-m)$.
\end{proposition}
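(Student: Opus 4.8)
The plan is to read both sides as one linear functional applied to a ``nested correction'' and to pass between the two corrections by the same $q$-Pascal calculus used in the two preceding propositions. Write $\langle f\rangle=\sum_{\lambda_1\ge\cdots\ge\lambda_r\ge 0}\frac{q^{\sum_{i=1}^r(\lambda_i^2+\lambda_i)/2}}{(q)_\lambda}f(\lambda)$, so that the claim is $\langle G^L\rangle=\langle G^R\rangle$ with $G^L=1+\sum_{i=0}^{m-1}q^{\sum_{j=0}^i(\lambda_{r-m+j}+1)}$ and $G^R=1+\sum_{i=0}^{m'-1}q^{\sum_{j=0}^i(\lambda_{r-1-2j}+1)}$. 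Both are nested geometric expressions: $G^L=1+q^{\lambda_{r-m}+1}(1+q^{\lambda_{r-m+1}+1}(\cdots))$ nests \emph{upward} from the index $r-m$ in unit steps, whereas $G^R=1+q^{\lambda_{r-1}+1}(1+q^{\lambda_{r-3}+1}(\cdots))$ nests \emph{downward} from $r-1$ in steps of two. First I would undo the telescoping of the previous proposition and return $\langle G^L\rangle$ to the $q$-binomial chain form $\sum_\rho q^{\sum(\rho_i^2+\rho_i)/2}\prod_{i=0}^{r-1}\genfrac[]{0pt}{}{\rho_i+\delta_{i,r-m}}{\rho_{i+1}}_q$ with $\rho_0=\infty$, so that the only admissible tools are the recursion \eqref{formula1} and the symmetry $\genfrac[]{0pt}{}{M}{N}_q=\genfrac[]{0pt}{}{M}{M-N}_q$.

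I would then separate the statement into two symmetries of this chain. The first is the \emph{reflection} $\langle G^L_m\rangle=\langle G^L_{r-m}\rangle$, which is exactly the content of the reduction $m'=\min(m,r-m)$; it mirrors the invariance of the right-hand side of Theorem~\ref{main theorem} under $m\mapsto r-m$ (as $-(m+1)\equiv r-m+1\pmod{r+2}$, the set of forbidden residues is unchanged). I would try to prove it by moving the single bump from the factor $i=r-m$ to the factor $i=m$ using $\genfrac[]{0pt}{}{M}{N}_q=\genfrac[]{0pt}{}{M}{M-N}_q$ together with a change of the summation variables that reverses the chain, the delicate point being to verify that the quadratic exponent $\sum(\rho_i^2+\rho_i)/2$ survives this substitution. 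The second symmetry, assuming $m\le r/2$, is the \emph{spreading} $\langle G^L_m\rangle=\langle G^R_m\rangle$, which I would prove by induction on $m$: extract the factor $q^{\lambda_{r-1}+1}$ that both corrections share at their outer step, and identify the remaining sum with the same identity for the pair $(r-2,m-1)$. The base cases $m=0$ and $m=1$ are immediate, since there $G^L=G^R$ literally.

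The hard part will be the inductive spreading step, precisely because $G^L$ and $G^R$ nest from opposite ends. Pulling out the common factor $q^{\lambda_{r-1}+1}$ and recognizing the remainder as a genuine rank-$(r-2)$ instance forces one to decouple the last two variables $\lambda_{r-1}$ and $\lambda_r$ from $\lambda_1,\ldots,\lambda_{r-2}$; since $(q)_\lambda$ and the quadratic exponent couple $\lambda_{r-2}\ge\lambda_{r-1}\ge\lambda_r$, this is not a formal factorization and must be produced by repeated application of \eqref{formula1} and of the reindexing shifts $\rho_{k+1}\mapsto\rho_{k+1}+1$ already used in the previous proof. I expect the genuine difficulty to be purely in the bookkeeping: verifying that the powers of $q$ emitted during this resummation reassemble into the step-two pattern of $G^R$ rather than the step-one pattern of $G^L$, and that the telescoping closes with no leftover boundary term (the analogue of the bump harmlessly reaching the absent factor $i=r$ in the previous proposition). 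The reflection symmetry is likely to be the subtler of the two ingredients, and if a clean change of variables for it cannot be found, I would instead fold it into the same induction by allowing the bump to migrate in either direction along the chain.
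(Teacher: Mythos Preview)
Your high-level partition into ``reflection'' ($m\leftrightarrow r-m$) and ``spreading'' ($G^L\leadsto G^R$ for $m\le r/2$) is sensible, but both concrete mechanisms you propose break down.

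For the spreading step you say you will ``extract the factor $q^{\lambda_{r-1}+1}$ that both corrections share at their outer step''. They do not share it there. In $G^R$ the variable $\lambda_{r-1}$ sits in the outermost factor and can indeed be pulled out: $G^R-1=q^{\lambda_{r-1}+1}\bigl(1+\sum_{i=0}^{m'-2}q^{\sum_{j=0}^i(\lambda_{r-3-2j}+1)}\bigr)$. In $G^L$, however, $\lambda_{r-1}$ occurs only in the single longest product (the $i=m-1$ term); the outermost factor of $G^L-1$ is $q^{\lambda_{r-m}+1}$. So there is no common outer factor to extract, and the advertised reduction to an $(r-2,m-1)$ instance never gets started. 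Even if a factor could be peeled off, the functional $\langle\cdot\rangle$ does not factor through dropping two variables: the coupling in $(q)_\lambda$ and in the quadratic exponent ties $\lambda_{r-2},\lambda_{r-1},\lambda_r$ together, so ``identify the remaining sum with the rank-$(r-2)$ identity'' is not a formal step, and your proposal gives no mechanism to produce it.

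For the reflection you propose reversing the $q$-binomial chain. The chain is genuinely asymmetric: it begins with $\rho_0=\infty$ (equivalently, a free factor $1/(q)_{\rho_1}$) and ends with an unconstrained $\rho_r$, so a change of variables that swaps the ends will not preserve the weight. Your own fallback---``allow the bump to migrate in either direction along the chain''---is in fact the right idea, but the missing ingredient is a \emph{local} identity that licenses a single migration step. The paper finds exactly this: writing $x_s=q^{\lambda_s}$ and $P\approx Q$ for $\langle P\rangle=\langle Q\rangle$, one has
\[
x_s(1+qx_{s+1})\,P(x_{\ne s},q)\ \approx\ x_{s+1}(1+qx_{s-1})\,P(x_{\ne s},q),
\]
proved by a single shift-and-cancel argument (shift $\lambda_1,\ldots,\lambda_s$ by $1$ in one piece and $\lambda_1,\ldots,\lambda_{s-1}$ by $1$ in the other). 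Iterating this swap moves products of $x$'s across the index set and yields, all at once, the more general statement
\[
\Bigl(\sum_{i=0}^{s-1}q^i\prod_{j=0}^ix_{l-s+j}\Bigr)P(x_{\ge l},q)\ \approx\ \Bigl(\sum_{i=0}^{\min(s,l-s)-1}q^i\prod_{j=0}^ix_{l-1-2j}\Bigr)P(x_{\ge l},q),
\]
whose specialization $l=r$, $s=m$, $P=1$ is the proposition. In particular, reflection and spreading are not two separate symmetries but both fall out of the same induction on $s$ once the local swap is available. Your outline never isolates this lemma, and the surrogate tools you name ($q$-Pascal on the binomials, the symmetry $\genfrac[]{0pt}{}{M}{N}_q=\genfrac[]{0pt}{}{M}{M-N}_q$) do not by themselves let a bump hop across an index while keeping the quadratic weight intact.
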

Before proving this proposition we introduce the following notation. Suppose $P(x_1,\ldots,x_r,q)$ and $Q(x_1,\ldots,x_r,q)$ are polynomials in $x_1,\ldots,x_r$ and $q$. We will write $P\approx Q$ if 
$$
\sum_{\lambda_1\ge\ldots\ge\lambda_r}\frac{q^{\sum\limits_{i=1}^r\frac{\lambda^2_i+\lambda_i}{2}}}{(q)_{\lambda}}\left(P(q^{\lambda_1},\ldots,q^{\lambda_r},q)-Q(q^{\lambda_1},\ldots,q^{\lambda_r},q)\right)=0.
$$

Proposition \ref{main proposition} says that
$$
\sum_{i=0}^{m-1}q^i\prod_{j=0}^ix_{r-m+j}\approx\sum_{i=0}^{\min(m,r-m)-1}q^i\prod_{j=0}^ix_{r-1-2j}.
$$
We will prove a more general statement.
\begin{proposition}
Suppose $0\le s\le l-1$ and $l\le r$. Then
\begin{gather}\label{main transformation}
\left(\sum_{i=0}^{s-1}q^i\prod_{j=0}^ix_{l-s+j}\right)P(x_{\ge l},q)\approx\left(\sum_{i=0}^{\min(s,l-s)-1}q^i\prod_{j=0}^ix_{l-1-2j}\right)P(x_{\ge l},q),
\end{gather}
where $P(x_{\ge l},q)$ is any polynomial that doesn't depend on $x_1,\ldots,x_{l-1}$.
\end{proposition}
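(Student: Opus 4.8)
The plan is to prove \eqref{main transformation} by migrating one side into the other through repeated summation over a single index, exploiting that $\approx$ is a \emph{linear} relation on polynomials. First I would reduce to a finite--depth identity. Since $P(x_{\ge l},q)$ is an arbitrary polynomial in $x_l,\dots,x_r$ while the two sides involve only $x_1,\dots,x_{l-1}$, it suffices to prove the relation with $\lambda_l$ frozen: for every $n\ge 0$ one shows
$$
\sum_{\lambda_1\ge\dots\ge\lambda_{l-1}\ge n}\frac{q^{\sum_{i=1}^{l-1}\frac{\lambda_i^2+\lambda_i}{2}}}{(q)_{\lambda_1-\lambda_2}\cdots(q)_{\lambda_{l-1}-n}}\bigl(L-R\bigr)=0,
$$
where $L,R$ denote the left and right polynomials of \eqref{main transformation}, so that $\lambda_l$ (hence $P$) is a spectator. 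The engine is the evaluation of the inner sum over one index $\lambda_a$, with its neighbours $\lambda_{a-1}$ (ceiling) and $\lambda_{a+1}$ (floor) fixed. Putting $N=\lambda_{a-1}-\lambda_{a+1}$ and using the finite $q$-binomial theorem $\sum_k\genfrac[]{0pt}{}{N}{k}_q q^{\binom{k}{2}}z^k=(-z)_N$ (and Euler's identity at the top index, where $\lambda_0=\infty$), one obtains, with the convention $x_0:=0$,
$$
\frac{\langle x_a^{\,c}\rangle}{\langle 1\rangle}=x_{a+1}^{\,c}\,\frac{(-qx_{a-1})_c}{(-qx_{a+1})_c}.
$$
The case $c=1$, multiplied by $1+qx_{a+1}$, yields the elementary move $x_a(1+qx_{a+1})\,T\approx x_{a+1}(1+qx_{a-1})\,T$ for any $T$ free of $x_a$; this is the $q$-series heart of the statement.

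Next I would set up an induction using the recursions $L(l,s)=x_{l-s}\bigl(1+q\,L(l,s-1)\bigr)$ and $R(l,s)=x_{l-1}\bigl(1+q\,R(l-2,s-1)\bigr)$, which one checks directly from the definitions. The base cases are $s=1$ (both sides equal $x_{l-1}$) and $s=2$, where a single elementary move turns $x_{l-2}(1+qx_{l-1})=L(l,2)$ into $x_{l-1}(1+qx_{l-3})=R(l,2)$. For the inductive step the idea is that applying the move repeatedly migrates the outermost factor of the product from index $l-s$ up to $l-1$; each application replaces $x_a(1+qx_{a+1})$ by $x_{a+1}(1+qx_{a-1})$, i.e. raises the leading index by one while lowering the trailing index by two, which is exactly the mechanism producing the odd--step pattern $x_{l-1},x_{l-3},x_{l-5},\dots$ of $R(l,s)$.

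The appearance of $\min(s,l-s)$ should come out as a boundary effect. If the chain of factors were allowed to continue for the full $s$ steps, its lowest index would be $l-2s+1$, which is $\ge 1$ precisely when $s\le l-s$; in that regime all $s$ terms survive. When $s>l-s$ the chain reaches the top index, where the move reads $x_1(1+qx_2)\approx x_2$ with \emph{no} correction term because $x_0=0$, so the product truncates — this ``bounce'' at the boundary is what replaces $s$ by $\min(s,l-s)$.

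The hard part will be controlling the composition of these moves so that the intermediate expressions stay in the polynomial ring and telescope exactly to $R(l,s)$. A move applied in the wrong order generically produces higher powers $x_a^{\,c}$ and lets the indices spread, leaving the multilinear form of the target; one must then invoke the $c\ge 2$ cases of the reflection formula above and show that the resulting contributions recombine, while simultaneously accounting correctly for the boundary truncation. The cleanest way to discharge this bookkeeping is likely to repackage the whole argument as an iterated application of the $q$-binomial recursion \eqref{formula1}, mirroring the telescoping already used in the proof of the preceding proposition, with the reflection picture serving as the conceptual guide rather than the literal computation.
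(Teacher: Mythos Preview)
Your reduction (freezing $\lambda_l$) and your derivation of the elementary move $x_a(1+qx_{a+1})T\approx x_{a+1}(1+qx_{a-1})T$ are both correct; this move is exactly the paper's Lemma~\ref{simple transformation}. The gap is in the induction scheme. Your recursion $L(l,s)=x_{l-s}\bigl(1+q\,L(l,s-1)\bigr)$ peels off the \emph{low} variable $x_{l-s}$. Since $x_{l-s}$ cannot be absorbed into the spectator $P(x_{\ge l},q)$, you cannot apply the induction hypothesis to $L(l,s-1)$; and if instead you try to push $x_{l-s}$ upward with elementary moves, you immediately create terms like $q^2x_{l-3}^2x_{l-1}$ (already for $s=3$), which takes you out of the multilinear target and, as you yourself note, does not obviously recombine. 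Invoking the $c\ge 2$ reflection identities is not enough: those identities describe a single inner summation, whereas the square $x_{l-3}^2$ now interacts with \emph{two} adjacent inner sums simultaneously, and you give no mechanism for the cancellation.

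The paper avoids this by peeling from the \emph{high} end. The identity
\[
L(l,s)=(1+qx_{l-1})\,L(l-1,s-1)-qx_{l-1}\,L(l-2,s-2)
\]
has both prefactors supported on $x_{\ge l-1}$, so they can be adjoined to $P$ and the induction hypothesis applies directly to $L(l-1,s-1)$ and $L(l-2,s-2)$. After that one still has to convert each resulting term $(1+qx_{l-1})\prod_{j=0}^{i}x_{l-2-2j}$ into $(1+qx_{l-3-2i})\prod_{j=0}^{i}x_{l-1-2j}$; this is done by a separate chained application of the elementary move (the paper's Lemma~\ref{second transformation}), and the two pieces then telescope to $R(l,s)$. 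The boundary case $2s\ge l+2$ is handled by the same computation, with the truncation coming from $\min(s-1,l-s)$ in the induction hypothesis rather than from a ``bounce'' at $x_0=0$. The missing ingredients in your sketch are precisely this high-end decomposition and the chain lemma; without them the argument does not close.
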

\begin{proof}
We adopt the following conventions, $x_{<1}=0$ and $x_{>r}=1$. 
\begin{lemma}\label{simple transformation}
For $1\le s\le r$ we have
$$
x_s(1+qx_{s+1})P(x_{\ne s},q)\approx x_{s+1}(1+qx_{s-1})P(x_{\ne s},q),
$$
where $P(x_{\ne s},q)$ is a polynomial that doesn't depend on $x_s$.
\end{lemma}
\begin{proof}
We have
\begin{align}
&\sum_{\lambda_1\ge\ldots\ge\lambda_r}\frac{q^{\sum\limits_{i=1}^r\frac{\lambda^2_i+\lambda_i}{2}}}{(q)_{\lambda}}\left(q^{\lambda_s}-q^{\lambda_{s+1}}+q^{\lambda_s+\lambda_{s+1}+1}-q^{\lambda_{s-1}+\lambda_{s+1}+1}\right)P(q^{\lambda_{\ne s}},q)=\notag\\
&=-\sum_{\lambda_1\ge\ldots\ge\lambda_r}\frac{q^{\sum\limits_{i=1}^r\frac{\lambda^2_i+\lambda_i}{2}}}{(q)_{\lambda}}q^{\lambda_{s+1}}(1-q^{\lambda_s-\lambda_{s+1}})P(q^{\lambda_{\ne s}},q)+\label{sum1}\\
&+\sum_{\lambda_1\ge\ldots\ge\lambda_r}\frac{q^{\sum\limits_{i=1}^r\frac{\lambda^2_i+\lambda_i}{2}}}{(q)_{\lambda}}q^{\lambda_s+\lambda_{s+1}+1}(1-q^{\lambda_{s-1}-\lambda_s})P(q^{\lambda_{\ne s}},q)\label{sum2}.
\end{align}
In~\eqref{sum1} we make the shift $\lambda_i\mapsto\lambda_i+1$ for $i=1,\ldots,s$ and in the sum~\eqref{sum2} we make the shift $\lambda_i\mapsto\lambda_i+1$ for $i=1,\ldots,s-1$. We get
\begin{align*}
&-\sum_{\lambda_1\ge\ldots\ge\lambda_r}\frac{q^{\sum\limits_{i=1}^r\frac{\lambda^2_i+\lambda_i}{2}}}{(q)_{\lambda}}q^{\lambda_{s+1}}q^{\sum\limits_{i=1}^s(\lambda_i+1)}P(q^{\lambda_1+1},\ldots,q^{\lambda_{s-1}+1},q^{\lambda_{\ge s+1}},q)+\\
&+\sum_{\lambda_1\ge\ldots\ge\lambda_r}\frac{q^{\sum\limits_{i=1}^r\frac{\lambda^2_i+\lambda_i}{2}}}{(q)_{\lambda}}q^{\lambda_s+\lambda_{s+1}+1}q^{\sum\limits_{i=1}^{s-1}(\lambda_i+1)}P(q^{\lambda_1+1},\ldots,q^{\lambda_{s-1}+1},q^{\lambda_{\ge s+1}},q)=0.
\end{align*}
The lemma is proved.
\end{proof}
\begin{lemma}\label{second transformation}
Suppose that $l\le r$ and $0\le s\le\frac{l-2}{2}$, then
$$
(1+qx_l)\prod_{i=0}^{s}x_{l-1-2i}P(x_{\ge l},q)\approx (1+qx_{l-2s-2})\prod_{i=0}^sx_{l-2i}P(x_{\ge l},q).
$$
\end{lemma}
\begin{proof}
By Lemma \ref{simple transformation}
\begin{align*}
&(1+qx_l)x_{l-1}x_{l-3}\ldots x_{l-1-2s}P(x_{\ge l},q)\approx\\
&\approx x_{l}(1+qx_{l-2})x_{l-3}x_{l-5}\ldots x_{l-1-2s}P(x_{\ge l},q)\approx\\
&\approx x_{l}x_{l-2}(1+qx_{l-4})x_{l-5}\ldots x_{l-1-2s}P(x_{\ge l},q)\approx\ldots \\
&\approx x_{l}x_{l-2}\ldots x_{l-2s}(1+qx_{l-2-2s})P(x_{\ge l},q).
\end{align*}
The lemma is proved.
\end{proof}

We will prove \eqref{main transformation} by induction on $s$. The case $s=1$ is trivial and the case $s=2$ follows from Lemma~\ref{simple transformation}. Suppose $s\ge 3$. We have
\begin{align*}
&\left(\sum_{i=0}^{s-1}q^i\prod_{j=0}^ix_{l-s+j}\right)P(x_{\ge l},q)=\\
&=\left((1+qx_{l-1})\sum_{i=0}^{s-2}q^i\prod_{j=0}^ix_{l-s+j}-qx_{l-1}\sum_{i=0}^{s-3}q^i\prod_{j=0}^ix_{l-s+j}\right)P(x_{\ge l},q).
\end{align*}
Suppose that $2s\le l+1$, then 
\begin{align*}
&\left((1+qx_{l-1})\sum_{i=0}^{s-2}q^i\prod_{j=0}^ix_{l-s+j}-qx_{l-1}\sum_{i=0}^{s-3}q^i\prod_{j=0}^ix_{l-s+j}\right)P(x_{\ge l},q)\stackrel{\substack{\text{by the induction}\\\text{assumption}}}{\approx}\\
&\approx\left((1+qx_{l-1})\sum_{i=0}^{s-2}q^i\prod_{j=0}^ix_{l-2-2j}-\sum_{i=1}^{s-2}q^i\prod_{j=0}^ix_{l-1-2j}\right)P(x_{\ge l},q)\stackrel{\text{by Lemma \ref{second transformation}}}{\approx}\\
&\approx\left(\sum_{i=0}^{s-2}(1+qx_{l-3-2i})q^i\prod_{j=0}^ix_{l-1-2j}-\sum_{i=1}^{s-2}q^i\prod_{j=0}^ix_{l-1-2j}\right)P(x_{\ge l},q)=\\
&=\left(\sum_{i=0}^{s-1}q^i\prod_{j=0}^ix_{l-1-2j}\right)P(x_{\ge l},q).
\end{align*}
We see that we have done the induction step in the case $2s\le l$. If $2s=l+1$, then it remains to note that 
$$
\left(\sum_{i=0}^{s-1}q^i\prod_{j=0}^ix_{2s-2-2j}\right)P(x_{\ge 2s-1},q)=\left(\sum_{i=0}^{s-2}q^i\prod_{j=0}^ix_{2s-2-2j}\right)P(x_{\ge 2s-1},q).
$$
Suppose that $2s\ge l+2$, then
\begin{align*}
&\left((1+qx_{l-1})\sum_{i=0}^{s-2}q^i\prod_{j=0}^ix_{l-s+j}-qx_{l-1}\sum_{i=0}^{s-3}q^i\prod_{j=0}^ix_{l-s+j}\right)P(x_{\ge l},q)\stackrel{\substack{\text{by the induction}\\\text{assumption}}}{\approx}\\
&\approx\left((1+qx_{l-1})\sum_{i=0}^{l-s-1}q^i\prod_{j=0}^ix_{l-2-2j}-\sum_{i=1}^{l-s}q^i\prod_{j=0}^ix_{l-1-2j}\right)P(x_{\ge l},q)\stackrel{\text{by Lemma \ref{second transformation}}}{\approx}\\
&\approx\left(\sum_{i=0}^{l-s-1}(1+qx_{l-3-2i})q^i\prod_{j=0}^ix_{l-1-2j}-\sum_{i=1}^{l-s}q^i\prod_{j=0}^ix_{l-1-2j}\right)P(x_{\ge l},q)=\\
&=\left(\sum_{i=0}^{l-s-1}q^i\prod_{j=0}^ix_{l-1-2j}\right)P(x_{\ge l},q).
\end{align*}
The proposition is proved.
\end{proof}

From Proposition \ref{main proposition} it follows that
$$
h_0\left(\M(r,n)^{T^{\ow(m)}_{1,1}}\right)=h_0\left(\M(r,n)^{T^{\ow(r-m)}_{1,1}}\right).
$$
We can also see that the substitution $m\mapsto r-m$ doesn't change the right-hand side of~\eqref{equation:main theorem}. So in the rest of the proof of the theorem we assume that $m\le \frac{r}{2}$.
 
\subsection{The case $r=2k+1$}\label{odd case}

We have $0\le m\le k$.
\begin{proposition}\label{prop1}
\begin{align}
&\sum_{\lambda_1\ge\ldots\ge\lambda_r}\frac{q^{\sum\limits_{i=1}^r\frac{\lambda^2_i+\lambda_i}{2}}}{(q)_{\lambda}}\left(1+\sum\limits_{i=0}^{m-1} q^{\sum\limits_{j=0}^i(\lambda_{r-1-2j}+1)}\right)=\label{big sum}\\
&=(-q)_{\infty}\sum_{\lambda_1\ge\ldots\ge\lambda_k}\frac{q^{\sum\limits_{i=1}^k(\lambda^2_i+\lambda_i)}}{(q)_{\lambda}}\left(1+\sum\limits_{i=0}^{m-1} q^{\sum\limits_{j=0}^i(\lambda_{k-j}+1)}\right).\label{small sum}
\end{align}
\end{proposition}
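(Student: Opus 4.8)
The plan is to exploit the parity of $r=2k+1$ by splitting the summation variables $\lambda_1\ge\dots\ge\lambda_{2k+1}$ into the $k$ even-indexed variables $\lambda_2,\lambda_4,\dots,\lambda_{2k}$, which I will keep, and the $k+1$ odd-indexed variables $\lambda_1,\lambda_3,\dots,\lambda_{2k+1}$, which I will sum out. The first thing to observe is that the correction factor on the left-hand side depends only on the kept variables: since $r-1-2j=2(k-j)$, the exponents in $1+\sum_{i=0}^{m-1}q^{\sum_{j=0}^i(\lambda_{r-1-2j}+1)}$ involve exactly $\lambda_{2k},\lambda_{2(k-1)},\dots$. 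Hence under the identification $\nu_a=\lambda_{2a}$ this factor becomes $1+\sum_{i=0}^{m-1}q^{\sum_{j=0}^i(\nu_{k-j}+1)}$, which is precisely the correction factor on the right-hand side. Thus the correction factor is inert with respect to the odd-variable summation, and it suffices to prove the identity with both correction factors deleted (the case $m=0$); the general case then follows verbatim.

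Next I would fix the even variables and sum over the odd ones independently. This decouples because every factor of the denominator $(q)_\lambda=(q)_{\lambda_1-\lambda_2}\cdots(q)_{\lambda_{2k}-\lambda_{2k+1}}(q)_{\lambda_{2k+1}}$ contains exactly one odd-indexed variable. The two classical tools I would invoke (both in \cite{Andrews}) are Euler's identity $\sum_{t\ge0}\frac{q^{\binom t2}z^t}{(q)_t}=(-z)_\infty$ and the finite $q$-binomial theorem $\sum_{t=0}^{N}q^{\binom t2}\genfrac[]{0pt}{}{N}{t}_qz^t=(-z)_N$. Applying Euler's identity to the top sum $\sum_{\lambda_1\ge\lambda_2}q^{\binom{\lambda_1+1}2}/(q)_{\lambda_1-\lambda_2}$ (with $z=q^{\lambda_2+1}$) produces $q^{\binom{\lambda_2+1}2}(-q)_\infty/(-q)_{\lambda_2}$; applying the finite $q$-binomial theorem to each sandwiched sum over $\lambda_{2j+1}$ between $\lambda_{2j}$ and $\lambda_{2j+2}$ (with $z=q^{\lambda_{2j+2}+1}$ and $N=\lambda_{2j}-\lambda_{2j+2}$) produces $q^{\binom{\lambda_{2j+2}+1}2}(-q)_{\lambda_{2j}}/\big((q)_{\lambda_{2j}-\lambda_{2j+2}}(-q)_{\lambda_{2j+2}}\big)$; and the bottom sum over $\lambda_{2k+1}\in[0,\lambda_{2k}]$ is the special case $\lambda_{2j+2}=0$, giving $(-q)_{\lambda_{2k}}/(q)_{\lambda_{2k}}$.

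It then remains to reassemble these factors. The powers of $(-q)$ telescope,
$$
\frac{(-q)_\infty}{(-q)_{\lambda_2}}\cdot\prod_{j=1}^{k-1}\frac{(-q)_{\lambda_{2j}}}{(-q)_{\lambda_{2j+2}}}\cdot(-q)_{\lambda_{2k}}=(-q)_\infty,
$$
so the global prefactor $(-q)_\infty$ emerges automatically. The surviving factors $(q)_{\lambda_{2j}-\lambda_{2j+2}}^{-1}$ and $(q)_{\lambda_{2k}}^{-1}$ reassemble into $(q)_\nu^{-1}$ for $\nu=(\lambda_2,\dots,\lambda_{2k})$. Finally each kept variable $\lambda_{2a}=\nu_a$ acquires the exponent $\binom{\nu_a+1}2$ from the original quadratic form together with a second $\binom{\nu_a+1}2$ coming out of the odd-variable sums, for a total of $2\binom{\nu_a+1}2=\nu_a^2+\nu_a$; summing over $a$ reproduces the quadratic form $\sum_{i=1}^k(\nu_i^2+\nu_i)$ of~\eqref{small sum}. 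Carrying along the inert correction factor identified in the first step then yields the claimed equality of \eqref{big sum} and \eqref{small sum}.

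I expect the computation itself to be routine; the delicate points are purely bookkeeping. The main thing to verify carefully is the index matching $r-1-2j=2(k-j)$ that makes the correction factor a function of the kept variables alone, together with the boundary conventions $\lambda_0=\infty$ and $\lambda_{2k+2}=0$ that let the top and bottom sums be treated as the extreme cases of the sandwiched sum; once these are fixed, the telescoping of the $(-q)$-factors and the doubling $2\binom{\nu_a+1}2=\nu_a^2+\nu_a$ of the exponents are the only substantive identities, and both are immediate.
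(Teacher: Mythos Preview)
Your proposal is correct and follows essentially the same approach as the paper: fix the even-indexed variables $\lambda_2,\lambda_4,\dots,\lambda_{2k}$ and sum out the odd-indexed ones using the $q$-binomial/Euler identities. The paper compresses all of this into a single appeal to the identity~\eqref{qbin}, whereas you spell out the three cases (top, sandwiched, bottom), the telescoping of the $(-q)$-factors, and the exponent doubling; in particular you make explicit the key observation that the correction factor involves only $\lambda_{r-1-2j}=\lambda_{2(k-j)}$ and is hence inert under the odd-variable summation, which the paper leaves implicit.
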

\begin{proof}
We have the following equation
\begin{gather}\label{qbin}
\sum_{i=\beta}^{\alpha}\frac{q^{\frac{i(i+1)}{2}}q^{\frac{\beta(\beta+1)}{2}}}{(q)_{\alpha-i}(q)_{i-\beta}}=q^{\beta(\beta+1)}\frac{(-q^{\beta+1})_{\alpha-\beta}}{(q)_{\alpha-\beta}}.
\end{gather}
It can be easily derived from the $q$-binomial formula (see e.g.\cite{Andrews}). 

We can fix $\lambda_2,\lambda_4,\ldots,\lambda_{2k}$ in~\eqref{big sum} and sum over $\lambda_1,\lambda_3,\ldots,\lambda_{2k+1}$ using~\eqref{qbin}. Then we get exactly the sum~\eqref{small sum}. 
\end{proof}

\begin{proposition}\label{prop2}
$$
\sum_{\lambda_1\ge\ldots\ge\lambda_k}\frac{q^{\sum\limits_{i=1}^k(\lambda^2_i+\lambda_i)}}{(q)_{\lambda}}\left(1+\sum\limits_{i=0}^{m-1} q^{\sum\limits_{j=0}^i(\lambda_{k-j}+1)}\right)=\frac{(q^{m+1},q^{2k-m+2},q^{2k+3};q^{2k+3})_{\infty}}{(q)_{\infty}}.
$$
\end{proposition}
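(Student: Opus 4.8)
The plan is to recognize the right-hand side as an Andrews--Gordon product and to reduce the proposition to a purely $q$-series identity between two fermionic sums, one of which is the classical Gordon sum. First I would recall Gordon's generalization of the Rogers--Ramanujan identities in the form with difference denominators: for $1\le i\le k+1$,
$$
\sum_{\lambda_1\ge\ldots\ge\lambda_k\ge 0}\frac{q^{\lambda_1^2+\ldots+\lambda_k^2+\lambda_i+\lambda_{i+1}+\ldots+\lambda_k}}{(q)_\lambda}=\prod_{\substack{n\ge 1\\ n\not\equiv 0,\pm i\pmod{2k+3}}}\frac{1}{1-q^n}.
$$
Taking $i=m+1$, which lies in the admissible range since $0\le m\le k$, shows that the product on the right-hand side of Proposition~\ref{prop2} is exactly the Gordon sum $S_m:=\sum_{\lambda}q^{\sum_l\lambda_l^2+\lambda_{m+1}+\ldots+\lambda_k}/(q)_\lambda$. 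Hence it is enough to prove the fermionic identity
$$
A_m:=\sum_{\lambda_1\ge\ldots\ge\lambda_k}\frac{q^{\sum_{l=1}^k(\lambda_l^2+\lambda_l)}}{(q)_\lambda}\left(1+\sum_{i=0}^{m-1}q^{\sum_{j=0}^i(\lambda_{k-j}+1)}\right)=S_m,
$$
that is, that inserting the correction factor into the index-$1$ Gordon sum raises the Gordon index from $1$ to $m+1$.

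I would prove $A_m=S_m$ by induction on $m$. The base case $m=0$ is immediate: the correction factor is $1$, and $A_0=\sum_\lambda q^{\sum_l\lambda_l^2+\sum_l\lambda_l}/(q)_\lambda=S_0$ is precisely the Andrews--Gordon identity with $i=1$. For the inductive step I would isolate the top correction term, so that $A_m-A_{m-1}$ is the single sum carrying the extra factor $q^{\sum_{j=0}^{m-1}(\lambda_{k-j}+1)}$. The key observation is that the constant $q^{m}$ produced by this factor is exactly what is needed to complete the square on the last $m$ variables: the total exponent of this term equals
$$
\sum_{l\le k-m}\lambda_l^2+\sum_{l>k-m}(\lambda_l+1)^2+(\lambda_1+\ldots+\lambda_{k-m}).
$$
After the substitution $\lambda_l\mapsto\lambda_l+1$ for $l>k-m$ I would match the resulting sum against
$$
S_m-S_{m-1}=\sum_{\lambda}\frac{q^{\sum_l\lambda_l^2+\lambda_{m+1}+\ldots+\lambda_k}}{(q)_\lambda}\left(1-q^{\lambda_m}\right).
$$
Once the single-step identity $A_m-A_{m-1}=S_m-S_{m-1}$ is established, the telescoping $A_m=A_0+\sum_{p=1}^m(A_p-A_{p-1})=S_0+(S_m-S_0)=S_m$ finishes the reduction, and Gordon's theorem identifies $S_m$ with the required product.

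The hard part will be the last matching step $A_m-A_{m-1}=S_m-S_{m-1}$. In the single-variable case $k=1$ it is transparent: the completed square $q^{(\lambda_1+1)^2}$ telescopes at once against $q^{\lambda_1^2}/(q)_{\lambda_1}$ after the shift $\lambda_1\mapsto\lambda_1-1$, using $1/(q)_{\lambda_1-1}=(1-q^{\lambda_1})/(q)_{\lambda_1}$, and one recovers $S_1$ from $S_0$. For $k\ge 2$, however, shifting the tail variables distorts the difference denominators $(q)_{\lambda_{k-m}-\lambda_{k-m+1}}$ and $(q)_{\lambda_k}$ and forces the boundary constraint $\lambda_{k-m}\ge\lambda_{k-m+1}-1$, so the cancellation no longer happens in one stroke. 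Reassembling these shifted sums into $S_m-S_{m-1}$ is where the real work lies; I expect to carry it out by a short sequence of elementary index shifts combined with the $q$-Pascal recurrence~\eqref{formula1}, in the same spirit as Lemma~\ref{simple transformation}. As a consistency check, for $k=2$, $m=1$ both $A_1-A_0$ and $S_1-S_0$ expand as $q+q^3+q^4+q^5+\ldots$, which confirms that the (delicate) bookkeeping is on track.
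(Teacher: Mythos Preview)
Your telescoping strategy is exactly what the paper does, and it is correct; the only real gap is that you have not recognised that the ``hard part'' $A_m-A_{m-1}=S_m-S_{m-1}$ is a one-line citation rather than a computation to be carried out by hand.

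Concretely, write the Andrews--Gordon sums with a free variable $x$ as the functions $J_{k+1,i}(0,x,q)$ from Chapter~7 of Andrews' book:
\[
J_{k+1,i}(0,x,q)=\sum_{\lambda_1\ge\ldots\ge\lambda_k}x^{|\lambda|}\,
\frac{q^{\lambda_1^2+\ldots+\lambda_k^2+\lambda_i+\ldots+\lambda_k}}{(q)_\lambda},
\]
so that $S_m=J_{k+1,m+1}(0,1,q)$. Your difference $A_m-A_{m-1}$ has exponent
$\sum_l\lambda_l^2+\sum_l\lambda_l+\lambda_{k-m+1}+\ldots+\lambda_k+m$, which is precisely
\[
A_m-A_{m-1}=q^m\,J_{k+1,\,k-m+1}(0,q,q).
\]
On the other hand Andrews' recursion (the identity you need, stated in the paper as~\eqref{formula2}),
\[
J_{k+1,i}(0,x,q)-J_{k+1,i-1}(0,x,q)=(xq)^{\,i-1}J_{k+1,\,k-i+2}(0,xq,q),
\]
specialised to $x=1$ and $i=m+1$, gives
\[
S_m-S_{m-1}=q^m\,J_{k+1,\,k-m+1}(0,q,q).
\]
So $A_m-A_{m-1}=S_m-S_{m-1}$ immediately, and your telescoping finishes the proof.

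In other words, the extra factor $q^{\sum_l\lambda_l}$ in $A_m-A_{m-1}$ is exactly the specialisation $x=q$, and the recursion you want is the passage from $x=1$ to $x=q$ already packaged in Andrews' $J$-functions. The variable shift you attempt (sending $\lambda_l\mapsto\lambda_l+1$ for $l>k-m$) is essentially how one proves~\eqref{formula2} inside Andrews' book, but there is no need to redo it here; the paper simply quotes the recursion and unrolls it, obtaining
\[
J_{k+1,m+1}(0,1,q)=J_{k+1,1}(0,1,q)+\sum_{i=0}^{m-1}q^{\,i+1}J_{k+1,k-i}(0,q,q),
\]
which is precisely $S_m=A_m$.
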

\begin{proof}
Consider the functions $J_{k,i}(a,x,q)$ from Ch.7 of the book \cite{Andrews}. We only need the following two properties of $J_{k,i}$ (see \cite[Ch.7]{Andrews}).
\begin{align}
&J_{k,i}(0,x,q)=\sum_{\lambda_1\ge\ldots\ge\lambda_{k-1}}x^{|\lambda|}\frac{q^{\lambda_1^2+\ldots+\lambda_{k-1}^2+\lambda_i+\ldots+\lambda_{k-1}}}{(q)_{\lambda}},\notag\\
&J_{k,i}(0,x,q)-J_{k,i-1}(0,x,q)=(xq)^{i-1}J_{k,k-i+1}(0,xq,q).\label{formula2}
\end{align}
We have
\begin{align*}
&J_{k+1,m+1}(0,1,q)=J_{k+1,m}(0,1,q)+q^mJ_{k+1,k-m+1}(0,q,q)=\\
&=J_{k+1,m-1}(0,1,q)+q^{m-1}J_{k+1,k-m+2}(0,q,q)+q^mJ_{k+1,k-m+1}(0,q,q)=\ldots\\
&=J_{k+1,1}(0,1,q)+\sum_{i=0}^{m-1} q^{i+1}J_{k+1,k-i}(0,q,q)=\\
&=\sum_{\lambda_1\ge\ldots\ge\lambda_k}\frac{q^{\sum\limits_{i=1}^k(\lambda^2_i+\lambda_i)}}{(q)_{\lambda}}\left(1+\sum\limits_{i=0}^{m-1} q^{\sum\limits_{j=0}^i(\lambda_{k-j}+1)}\right).
\end{align*}
On the other hand we have (see \cite[Ch.7]{Andrews})
$$
J_{k+1,m+1}(0,1,q)=\frac{(q^{m+1},q^{2k-m+2},q^{2k+3};q^{2k+3})_{\infty}}{(q)_{\infty}}.
$$
This completes the proof of the proposition.
\end{proof}
Propositions \ref{prop1} and \ref{prop2} conclude the proof of the theorem in the case when $r$ is odd.

\subsection{The case $r=2k$}\label{even case}

We have $0\le m\le k$.
\begin{proposition}\label{proposition4}
\begin{align*}
&\sum_{\lambda_1\ge\ldots\ge\lambda_r}\frac{q^{\sum\limits_{i=1}^r\frac{\lambda^2_i+\lambda_i}{2}}}{(q)_{\lambda}}\left(1+\sum\limits_{i=0}^{m-1} q^{\sum\limits_{j=0}^i(\lambda_{r-1-2j}+1)}\right)=\\
&=\sum_{\lambda_1\ge\ldots\ge\lambda_k}\frac{(-q)_{\lambda_1}q^{\frac{\lambda_1^2+\lambda_1}{2}+\sum\limits_{i=2}^k(\lambda^2_i+\lambda_i)}}{(q)_{\lambda}}\left(1+\sum\limits_{i=0}^{m-1} q^{\sum\limits_{j=0}^i(\lambda_{k-j}+1)}\right).
\end{align*}
\end{proposition}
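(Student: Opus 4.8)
The plan is to mirror the proof of Proposition~\ref{prop1}, which handled the odd case, with the roles of the odd- and even-indexed summation variables interchanged; the only tool needed is the finite summation~\eqref{qbin}. First I would fix the odd-indexed variables $\lambda_1,\lambda_3,\ldots,\lambda_{2k-1}$ in the left-hand side and carry out the summation over the even-indexed variables $\lambda_2,\lambda_4,\ldots,\lambda_{2k}$. The crucial structural observation is that the extra factor $1+\sum_{i=0}^{m-1}q^{\sum_{j=0}^i(\lambda_{r-1-2j}+1)}$ involves only the indices $\lambda_{2k-1},\lambda_{2k-3},\ldots$, all of which are odd; hence it depends solely on the retained variables and pulls out of the inner sum.

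Once the odd-indexed variables are frozen, the $k$ inner summations decouple. For $1\le j\le k-1$ the variable $\lambda_{2j}$ is interior, constrained by $\lambda_{2j-1}\ge\lambda_{2j}\ge\lambda_{2j+1}$, and I would apply~\eqref{qbin} with $\alpha=\lambda_{2j-1}$ and $\beta=\lambda_{2j+1}$; the final variable $\lambda_{2k}$ is constrained by $\lambda_{2k-1}\ge\lambda_{2k}\ge 0$, so it is the boundary case $\beta=0$ of~\eqref{qbin}. Each interior summation outputs a factor $q^{\frac{\lambda_{2j+1}(\lambda_{2j+1}+1)}{2}}$, which combines with the half-weight already carried by $\lambda_{2j+1}$ to double it into $\lambda_{2j+1}^2+\lambda_{2j+1}$; thus every retained variable except $\lambda_1$ acquires a full weight, while $\lambda_1$ keeps its half-weight $\frac{\lambda_1^2+\lambda_1}{2}$, exactly as on the right-hand side. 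The leftover factors $\frac{1}{(q)_{\lambda_{2j-1}-\lambda_{2j+1}}}$, together with $\frac{1}{(q)_{\lambda_{2k-1}}}$ from the boundary term, reassemble into $\frac{1}{(q)_\lambda}$ for the retained $k$-tuple.

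The step I expect to require the most care is the bookkeeping of the factors $(-q^{\beta+1})_{\alpha-\beta}$. Writing $(-q^{a+1})_b=(-q)_{a+b}/(-q)_a$, the interior contributions give $\frac{(-q)_{\lambda_1}}{(-q)_{\lambda_3}},\frac{(-q)_{\lambda_3}}{(-q)_{\lambda_5}},\ldots,\frac{(-q)_{\lambda_{2k-3}}}{(-q)_{\lambda_{2k-1}}}$, and the boundary summation contributes the factor $(-q)_{\lambda_{2k-1}}$. These telescope to the single factor $(-q)_{\lambda_1}$. This is precisely where the even case diverges from the odd one: in Proposition~\ref{prop1} the largest variable was itself summed against an unbounded upper limit, which produced the global constant $(-q)_\infty$; here $\lambda_1$ is retained, and the telescoping instead localizes the binomial factor on $\lambda_1$, yielding the weight $(-q)_{\lambda_1}$ that appears on the right-hand side.

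Finally I would relabel the retained variables by setting the new $\lambda_i$ equal to the old $\lambda_{2i-1}$ for $1\le i\le k$. Under this relabelling the doubled weights, the factor $(-q)_{\lambda_1}$, the denominator $(q)_\lambda$, and the extra term $1+\sum_{i=0}^{m-1}q^{\sum_{j=0}^i(\lambda_{k-j}+1)}$ all take exactly the form claimed on the right-hand side, which completes the proof.
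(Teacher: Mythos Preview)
Your proposal is correct and is precisely the intended meaning of the paper's one-line proof ``Similar to the proof of Proposition~\ref{prop1}'': fix the variables on which the extra factor depends (here the odd-indexed $\lambda_1,\lambda_3,\ldots,\lambda_{2k-1}$) and sum the remaining ones via~\eqref{qbin}. Your observation that the telescoping of the $(-q^{\beta+1})_{\alpha-\beta}$ factors now terminates at $(-q)_{\lambda_1}$ rather than $(-q)_\infty$, because $\lambda_1$ is retained rather than summed, is exactly the point that distinguishes the even case from the odd one.
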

\begin{proof}
Similar to the proof of Proposition~\ref{prop1}.
\end{proof}

\begin{proposition}\label{second lemma}
We have
\begin{align*}
&\sum_{\lambda_1\ge\ldots\ge\lambda_k}\frac{(-q)_{\lambda_1}q^{\frac{\lambda_1^2+\lambda_1}{2}+\sum\limits_{i=2}^k(\lambda^2_i+\lambda_i)}}{(q)_{\lambda}}\left(2\sum\limits_{i=0}^{m-1} q^{\sum\limits_{j=0}^{i-1}(\lambda_{k-j}+1)}+q^{\sum\limits_{j=0}^{m-1}(\lambda_{k-j}+1)}\right)=\\
&=\sum_{\lambda_1\ge\ldots\ge\lambda_k}\frac{(-q)_{\lambda_1}q^{\frac{\lambda_1^2-\lambda_1}{2}+\sum\limits_{i=2}^{k}\lambda_i^2+\sum\limits_{i=m+1}^k\lambda_i}}{(q)_{\lambda}}.
\end{align*}
\end{proposition}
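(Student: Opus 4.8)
The plan is to recast the claim as a single \emph{shift relation} with respect to the measure $d\mu=\frac{(-q)_{\lambda_1}q^{\frac{\lambda_1^2+\lambda_1}{2}+\sum_{i=2}^k(\lambda_i^2+\lambda_i)}}{(q)_\lambda}$, exactly as the $\approx$-calculus of Lemma~\ref{simple transformation} was used in Proposition~\ref{main proposition}, but adapted to the extra factor $(-q)_{\lambda_1}$ and the asymmetric weight of $\lambda_1$. Setting $x_i=q^{\lambda_i}$ and $c_i=q^{\sum_{j=0}^{i-1}(\lambda_{k-j}+1)}=q^i\prod_{j=0}^{i-1}x_{k-j}$ (so $c_0=1$), the left-hand bracket is $2\sum_{i=0}^{m-1}c_i+c_m$, while dividing the summand on the right by $d\mu$ leaves the test function $q^{-\sum_{i=1}^{m}\lambda_i}=\prod_{i=1}^m x_i^{-1}$, since the exponents differ precisely by $-\sum_{i=1}^m\lambda_i$. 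Thus it suffices to prove $2\sum_{i=0}^{m-1}c_i+c_m\approx\prod_{i=1}^m x_i^{-1}$, where now $P\approx Q$ means $\sum_\lambda d\mu\,(P-Q)=0$.

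First I would establish the elementary shift lemmas for $d\mu$. Shifting $\lambda_i\mapsto\lambda_i+1$ simultaneously for $i=1,\dots,s$ multiplies $d\mu$ by an explicit monomial times $\frac{1+q^{\lambda_1+1}}{1-q^{\lambda_s-\lambda_{s+1}+1}}$: the numerator comes from $(-q)_{\lambda_1+1}=(-q)_{\lambda_1}(1+q^{\lambda_1+1})$ and the denominator from the one factor of $(q)_\lambda$ that changes, and the reindexed sum closes up via $\frac{1-q^n}{(q)_n}=\frac{1}{(q)_{n-1}}$. This produces an analogue of Lemma~\ref{simple transformation} for $s\ge 2$, together with an exceptional relation at $s=1$ carrying the $(1+q^{\lambda_1+1})$ correction. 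Iterating these as in Lemma~\ref{second transformation} telescopes the small-index factors making up the $c_i$ toward the large-index block $x_1^{-1}\cdots x_m^{-1}$.

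I would then run an induction on $m$, with the base case $m=0$ immediate (both sides collapse to $\sum_\lambda d\mu$), reducing each step to the increment relation $c_{m-1}+c_m\approx\prod_{i=1}^{m-1}x_i^{-1}-\prod_{i=1}^{m}x_i^{-1}$. In practice the cleanest way to evaluate both sides is to perform the $\lambda_1$-summation first: for the test functions not involving $x_1$ the inner sum over $\lambda_1\ge\lambda_2$ reproduces $\sum_{\lambda_1}\frac{(-q)_{\lambda_1}q^{\frac{\lambda_1^2+\lambda_1}{2}}}{(q)_{\lambda_1-\lambda_2}}$, whereas the factor $x_1^{-1}$ on the right turns the exponent $\frac{\lambda_1^2+\lambda_1}{2}$ into $\frac{\lambda_1^2-\lambda_1}{2}$; the resulting one-variable $q$-series are matched by the $q$-binomial identity~\eqref{qbin}, which is exactly the tool that produced the $(-q)_{\lambda_1}$ factor in Proposition~\ref{proposition4}. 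After this reduction the remaining sum over $\lambda_2\ge\cdots\ge\lambda_k$ has the same shape with $k$ lowered, so the computation closes by a secondary induction on $k$.

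The hard part will be bookkeeping the interplay of $(-q)_{\lambda_1}$ with the half-weight $\frac{\lambda_1^2+\lambda_1}{2}$, since, unlike the symmetric setting of Proposition~\ref{main proposition}, the variable $\lambda_1$ is genuinely distinguished. The coefficient $2$ on the left must be traced to the splitting $2+q^{\lambda_k+1}=(1+q^{\lambda_k+1})+1$ propagated through the telescope: one copy is absorbed into a shift via $(-q)_{\lambda_1}(1+q^{\lambda_1+1})=(-q)_{\lambda_1+1}$, and the other survives as the $q^{-\lambda_1}$ tail. Controlling the boundary terms ($x_{<1}=0$, $x_{>k}=1$) of these telescopes and, in particular, the degenerate case $m=k$, where $c_m$ already involves $x_1$, is where the genuine work lies; if the direct telescope becomes unwieldy, the two fermionic sums can alternatively be matched term by term against the Andrews $J_{k,i}$ recursions of Chapter~7 of~\cite{Andrews} used in Proposition~\ref{prop2}, or against the overpartition identity of~\cite{Corteel}.
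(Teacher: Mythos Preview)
Your broad strategy---set up an $\approx$-calculus adapted to the measure with the extra $(-q)_{\lambda_1}$ factor and an asymmetric $\lambda_1$-weight, including an exceptional shift relation at $s=1$, then telescope---is exactly the paper's. In fact your ``increment relation'' $c_{m-1}+c_m\approx\prod_{i=1}^{m}x_i^{-1}-\prod_{i=1}^{m-1}x_i^{-1}$ (you have the sign reversed) is, after multiplying through by $\prod_{i=1}^k x_i$, literally the paper's Lemma~\ref{trans3} with $s=m$. So the architecture matches.

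There are two issues. First, by choosing $d\mu$ with exponent $\frac{\lambda_1^2+\lambda_1}{2}+\sum_{i\ge 2}(\lambda_i^2+\lambda_i)$ you force the right-hand test function to be the Laurent monomial $\prod_{i\le m}x_i^{-1}$, and your shift lemmas then have to move toward \emph{negative} powers of $x_1$. The paper sidesteps this entirely by taking the base weight $\frac{\lambda_1^2-\lambda_1}{2}+\sum_{i\ge 2}\lambda_i^2$; then both test functions are honest polynomials, the identity reads $\prod_{i=m+1}^k x_i\approx_2\bigl(2\sum_{i=0}^{m-1}q^i\prod_{j>k-i}x_j+q^m\prod_{j>k-m}x_j\bigr)\prod_{i=1}^k x_i$, and the telescope $\prod_{i=m+1}^k x_i=\sum_{i=0}^{m-1}(1-x_{m-i})\prod_{j>m-i}x_j+\prod_{i=1}^k x_i$ sets up $m$ applications of Lemma~\ref{trans3}.

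Second, and more seriously, your proposed proof of the increment relation is not viable. Summing over $\lambda_1$ first leaves you comparing $\sum_{\lambda_1\ge\lambda_2}\frac{(-q)_{\lambda_1}q^{(\lambda_1^2+\lambda_1)/2}}{(q)_{\lambda_1-\lambda_2}}$ with $\sum_{\lambda_1\ge\lambda_2}\frac{(-q)_{\lambda_1}q^{(\lambda_1^2-\lambda_1)/2}}{(q)_{\lambda_1-\lambda_2}}$; these infinite sums are not equal and identity~\eqref{qbin} says nothing about them (it is a finite sum without any $(-q)_{\lambda_1}$ factor---it was used in Proposition~\ref{proposition4} to \emph{produce} that factor from a longer sum, not to remove it). The ``secondary induction on $k$'' therefore never gets started. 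The correct route is the one you already set up and then abandoned: iterate your shift lemma. One sweep $s=l-1,l-2,\ldots$ (the analogue of Lemma~\ref{second transformation}) turns $(1+qx_{s-1}x_s)$-factors into $(1+qx_sx_{s+1})$-factors, the special $s=1$ relation absorbs the boundary, and chaining these sweeps (the paper's Lemma~\ref{trans3}) yields the increment relation directly---no sum over $\lambda_1$ and no reference to~\eqref{qbin} or~\cite{Corteel} is needed here.
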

\begin{proof}

Suppose $P(x_1,\ldots,x_k,q)$ and $Q(x_1,\ldots,x_k,q)$ are polynomials in variables $x_1,\ldots,x_k$ and $q$. We will write $P\approx_2 Q$ if 
$$
\sum_{\lambda_1\ge\ldots\ge\lambda_k}\frac{(-q)_{\lambda_1}q^{\frac{\lambda_1^2-\lambda_1}{2}+\sum\limits_{i=2}^k\lambda^2_i}}{(q)_{\lambda}}\left(P(q^{\lambda_1},\ldots,q^{\lambda_k},q)-Q(q^{\lambda_1},\ldots,q^{\lambda_k},q)\right)=0.
$$
The proposition says that 
\begin{gather}\label{transformation3}
\prod_{i=m+1}^kx_i\approx_2\left(2\sum_{i=0}^{m-1}q^{i}\prod_{j=k-i+1}^kx_j+q^m\prod_{i=k-m+1}^kx_i\right)\prod_{i=1}^kx_i.
\end{gather}
The proof of \eqref{transformation3} is based on the following three lemmas. We adopt the conventions, $x_{<1}=0$ and $x_{>k}=1$.

\begin{lemma}\label{simple transformation2}
Suppose $1\le s\le k$, then we have
\begin{align*}
&x_s(1+qx_sx_{s+1})P(x_{\ne s},q)\approx_2 x_{s+1}(1+qx_{s-1}x_s)P(x_{\ne s},q),&\text{if $s\ge 2$,}\\
&x_1(1+x_2+qx_1x_2)P(x_{\ge 2},q)\approx_2x_2P(x_{\ge 2},q),&\text{if $s=1$.}
\end{align*}
\end{lemma}
\begin{proof}
Similar to the proof of Lemma \ref{simple transformation}.
\end{proof}

\begin{lemma}\label{transformation2}
Suppose $1\le s<l\le k+1$, then
\begin{align}
&(x_l-x_s)\prod_{i=s+1}^{l}x_i\prod_{i=l+1}^{k}x_i^2\approx_2q(x_{l-1}-x_{s-1})\prod_{i=s}^{l-1}x_i\prod_{i=l}^{k}x_i^2,&\text{if $s\ge 2$,}\label{f1}\\
&(x_l-x_1)\prod_{i=2}^lx_i\prod_{i=l+1}^kx_i^2\approx_2(1+qx_{l-1})\prod_{i=1}^{l-1}x_i\prod_{i=l}^kx_i^2,&\text{if $s=1$.}\label{f2}
\end{align}
\end{lemma}
\begin{proof}
Let us prove \eqref{f1}. Using Lemma \ref{simple transformation2} we have
\begin{align*}
&(1+qx_{s-1}x_s)x_{s+1}\prod_{i=s+2}^{l-1}x_i\prod_{i=l}^{k}x_i^2\approx_2\\
&\approx_2 x_s(1+qx_sx_{s+1})x_{s+2}\prod_{i=s+3}^{l-1}x_i\prod_{i=l}^{k}x_i^2\approx_2\ldots\\
&\approx_2 x_sx_{s+1}\ldots x_{l-1}(1+qx_{l-1}x_l)x_l\prod_{i=l+1}^{k}x_i^2.
\end{align*}
Thus, \eqref{f1} is proved. Equation \eqref{f2} can be proved similarly.
\end{proof}

\begin{lemma}\label{trans3}
For any $1\le s\le k$ we have
$$
(1-x_s)\prod_{i=s+1}^kx_i\approx_2q^{s-1}(1+qx_{k-s+1})\prod_{i=1}^{k-s+1}x_i\prod_{i=k-s+2}^kx_i^2.
$$
\end{lemma}

\begin{proof}
By Lemma \ref{transformation2} we have
\begin{align*}
&(1-x_s)\prod_{i=s+1}^kx_i\approx_2q(x_k-x_{s-1})\prod_{i=s}^kx_i\approx_2\\
&\approx_2 q^2(x_{k-1}-x_{s-2})\left(\prod_{i=s-1}^{k-1}x_i\right)x_k^2\approx_2\ldots\\
&\approx_2q^{s-1}(x_{k-s+2}-x_1)\prod_{i=2}^{k-s+2}x_i\prod_{i=k-s+3}^kx_i^2\approx_2\\
&\approx_2q^{s-1}(1+qx_{k-s+1})\prod_{i=1}^{k-s+1}x_i\prod_{i=k-s+2}^kx_i^2.
\end{align*}
The lemma is proved.
\end{proof}

We are ready to prove \eqref{transformation3}. We have
\begin{align*}
&\prod_{i=m+1}^kx_i=\sum_{i=0}^{m-1}(1-x_{m-i})\prod_{i=m-i+1}^kx_i+\prod_{i=1}^kx_i\stackrel{\text{by Lemma \ref{trans3}}}{\approx_2}\\
&\approx_2\left(\sum_{i=0}^{m-1}q^i(1+qx_{k-i})\prod_{j=k-i+1}^kx_j+1\right)\prod_{i=1}^kx_i=\\
&=\left(2\sum_{i=0}^{m-1}q^{i}\prod_{j=k-i+1}^kx_j+q^m\prod_{i=k-m+1}^kx_i\right)\prod_{i=1}^kx_i.
\end{align*}
This completes the proof of the proposition.
\end{proof}

\begin{proposition}\label{proposition3}
\begin{align*}
&\sum_{\lambda_1\ge\ldots\ge\lambda_k}\frac{(-q)_{\lambda_1}q^{\frac{\lambda_1^2+\lambda_1}{2}+\sum\limits_{i=2}^k(\lambda^2_i+\lambda_i)}}{(q)_{\lambda}}\left(1+\sum\limits_{i=0}^{m-1} q^{\sum\limits_{j=0}^i(\lambda_{k-j}+1)}\right)=\\
&=\frac{(-q)_{\infty}}{(q)_{\infty}}(q^{m+1},q^{2k-m+1},q^{2k+2};q^{2k+2})_{\infty}.
\end{align*}
\end{proposition}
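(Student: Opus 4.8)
The plan is to run the even case in strict parallel with the odd one, replacing the classical Andrews--Gordon functions $J_{k,i}$ used in Proposition~\ref{prop2} by their overpartition analogues; the factor $(-q)_{\lambda_1}$ in the summand is exactly the overpartition signature, and the evaluation of overpartition Andrews--Gordon multisums as infinite products is precisely what \cite{Corteel} supplies. Write $L_m$ for the left-hand side and introduce the bare overpartition sums
$$
\widetilde J_i=\sum_{\lambda_1\ge\ldots\ge\lambda_k}\frac{(-q)_{\lambda_1}\,q^{\frac{\lambda_1^2-\lambda_1}{2}+\sum_{l=2}^k\lambda_l^2+\sum_{l=i}^k\lambda_l}}{(q)_{\lambda}},\qquad 1\le i\le k+1,
$$
so that $\widetilde J_{m+1}$ is precisely the right-hand side of Proposition~\ref{second lemma} and $\widetilde J_{k+1}$ is the purely quadratic sum with no linear tail. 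The goal is to express $L_m$ through the $\widetilde J_i$ and then quote their product evaluations.

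First I would extract a recursion in $m$ from Proposition~\ref{second lemma}. Denoting the polynomial in the summand of $L_m$ by $P_3=1+\sum_{i=0}^{m-1}q^{\sum_{j=0}^{i}(\lambda_{k-j}+1)}$ and the polynomial on the left-hand side of Proposition~\ref{second lemma} by $P_L$, a monomial-by-monomial comparison gives the clean identity $2P_3=P_L+q^{\sum_{j=0}^{m-1}(\lambda_{k-j}+1)}$, whose last term is exactly the monomial that distinguishes $P_3$ from its version with parameter $m-1$. Summing against the common weight $\frac{(-q)_{\lambda_1}q^{\frac{\lambda_1^2+\lambda_1}{2}+\sum_{l\ge 2}(\lambda_l^2+\lambda_l)}}{(q)_\lambda}$ and replacing the $P_L$-contribution by $\widetilde J_{m+1}$ by means of Proposition~\ref{second lemma} then yields the recursion $L_m+L_{m-1}=\widetilde J_{m+1}$ for $1\le m\le k$, together with the base case $L_0=\widetilde J_1$. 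This is the overpartition counterpart of the telescoping of~\eqref{formula2} carried out in Proposition~\ref{prop2}, and solving it gives $L_m=\sum_{j=0}^{m}(-1)^{m-j}\widetilde J_{j+1}$.

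It then remains to evaluate the combination on the right and to check that it collapses onto a single product. Here I would invoke the overpartition analogue of the Andrews--Gordon identities due to \cite{Corteel}, which evaluates overpartition multisums of exactly this type, at modulus $2k+2$ and with the prefactor $(-q)_\infty$, as products of the shape $(-q)_\infty\prod_{\substack{n\ge 1\\ n\not\equiv 0,\pm(m+1)\pmod{2k+2}}}\frac{1}{1-q^n}$; the recursion above then reduces the proposition to a bookkeeping check that the alternating combination of these telescopes correctly. The hard part, I expect, is concentrated entirely in this matching: aligning the half-integral quadratic form $\frac{\lambda_1^2\pm\lambda_1}{2}$ on the first coordinate, the placement of the linear tail, and the exact list of forbidden residues with the conventions of \cite{Corteel}. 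In particular the self-conjugate value $m=k$, where $m+1=k+1$ is its own negative modulo $2k+2$ and the purely quadratic sum $\widetilde J_{k+1}$ enters, is where the residue bookkeeping degenerates; pinning down the correct product evaluation of $\widetilde J_{k+1}$, and hence the boundary case of the proposition, is the delicate point on which I would spend the most care.
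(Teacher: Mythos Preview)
Your plan is correct and is essentially the paper's own argument, spelled out in more detail than the paper's terse ``Combining \eqref{first}, \eqref{second} and Proposition~\ref{second lemma}.'' Your identity $2P_3=P_L+q^{\sum_{j=0}^{m-1}(\lambda_{k-j}+1)}$ is exactly how Proposition~\ref{second lemma} feeds into the proof, and summing it gives the recursion $L_m+L_{m-1}=\widetilde J_{m+1}$ with base case $L_0=\widetilde J_1$, just as you wrote.

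One point to sharpen: the Corteel--Mallet evaluation of $\widetilde J_{m+1}$ is \emph{not} a single product but a sum of two, namely $\widetilde J_{m+1}=\E_{k+1,m+1}(1/q,q)=(-q)_\infty\bigl(\Pi_{m+1}+\Pi_m\bigr)$ with $\Pi_s=\prod_{n\not\equiv 0,\pm s\,(2k+2)}(1-q^n)^{-1}$ and the convention $\Pi_0=0$. This two-term shape is precisely what makes your alternating sum telescope: $L_m=\sum_{j=0}^m(-1)^{m-j}\widetilde J_{j+1}=(-q)_\infty\sum_{j=0}^m(-1)^{m-j}(\Pi_{j+1}+\Pi_j)=(-q)_\infty\Pi_{m+1}$, since every interior $\Pi_j$ cancels against its neighbour. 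Equivalently, and closer to the paper's implicit argument, you can simply induct: $L_m=\widetilde J_{m+1}-L_{m-1}=(-q)_\infty(\Pi_{m+1}+\Pi_m)-(-q)_\infty\Pi_m=(-q)_\infty\Pi_{m+1}$. Once you know the evaluation is $\Pi_{m+1}+\Pi_m$, the boundary case $m=k$ is no longer delicate at all: the formula from \cite{Corteel} is stated uniformly for $1\le m+1\le k+1$, and the only degeneracy is that $\Pi_{k+1}$ has two forbidden residues instead of three, which is harmless.
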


\begin{proof}
Consider the functions $\E_{k+1,i}(a,q)$ from the paper \cite{Corteel}. It is proved there that
\begin{align}
&\E_{k+1,m+1}(a,q)=\sum_{\lambda_1\ge\ldots\ge\lambda_k}\frac{q^{\frac{\lambda_1^2+\lambda_1}{2}+\sum\limits_{i=2}^k\lambda_i^2+\sum\limits_{i=m+1}^k\lambda_i}\left(-\frac{1}{a}\right)_{\lambda_1}a^{\lambda_1}}{(q)_{\lambda}},\label{first}\\
&\E_{k+1,m+1}\left(\frac{1}{q},q\right)=\notag\\
&=\frac{(-q)_{\infty}}{(q)_{\infty}}\left((q^{m+1},q^{2k-m+1},q^{2k+2};q^{2k+2})_{\infty}+(q^{m},q^{2k-m+2},q^{2k+2};q^{2k+2})_{\infty}\right).\label{second}
\end{align}
Combining \eqref{first},\eqref{second} and Proposition~\ref{second lemma} we get the proof of the proposition.   
\end{proof}

Propositions \ref{proposition4} and \ref{proposition3} complete the proof of the theorem in the case of even $r$.

\end{document}